\newtheorem{remark}{Remark}
\newcommand{\bel}[1]{\begin{equation}\label{#1}}
\newcommand{\ee}{\end{equation}}
\newcommand{\LBA}{\left\{\begin{array}}
\newcommand{\EAR}{\end{array}\right.}
\newcommand{\prm}{\mathbb{P}}
\def\ovb{{\overline{b}}}
\def\ovc{{\overline{c}}}
\def\ovo{{\overline{0}}}
\def\ovv{{\overline{v}}}
\def\ovx{{\overline{x}}}
\def\ovy{{\overline{y}}}
\def\ovz{{\overline{z}}}
\def\ovw{{\overline{w}}}
\def\CA{{\mathcal A}}
\def\CG{{\mathcal G}}
\def\CH{{\mathcal H}}
\def\CI{{\mathcal I}}
\def\CL{{\mathcal L}}
\def\NP{{\mathbf{NP}}}
\def\coNP{{\mathbf{coNP}}}
\def\SIS{{\mathbf{SIS}}}
\def\ISIS{{\mathbf{ISIS}}}
\def\SIVP{{\mathbf{SIVP}}}
\def\SVP{{\mathbf{SVP}}}
\def\GAPSVP{{\mathbf{GapSVP}}}
\def\AGP{{\mathbf{AGWP}}}
\newcommand{\Set}[2]{\left\{\, #1 \;\middle|\; #2 \,\right\}}
\def\MN{{\mathbb{N}}}
\def\MZ{{\mathbb{Z}}}
\def\MR{{\mathbb{R}}}
\def\SSP{{\mathbf{SSP}}}
\def\CISE{{\mathbf{CISE}}}
\DeclareMathOperator{\Sph}{Sph}
\DeclareMathOperator{\SL}{{SL}}
\DeclareMathOperator{\PSL}{{PSL}}
\DeclareMathOperator{\GL}{{GL}}
\DeclareMathOperator{\Aut}{{Aut}}
\DeclareMathOperator{\Span}{{Span}}
\keywords{Spherical equations, 
finite groups, 
semidirect products,
metabelian groups,
average case complexity,
hash function family,
group-based cryptography.
}
\begin{document}

\title[Constrained inhomogeneous spherical equations]{Constrained inhomogeneous spherical equations:\\ average-case hardness}
\author{Alexander Ushakov}
\address{Department of Mathematical Sciences, Stevens Institute of Technology, Hoboken NJ 07030, USA}\email{aushakov@stevens.edu}
\thanks{2020 \textit{Mathematics Subject Classification.} 20F16, 20F10, 68W30.}

\begin{abstract}
In this paper we analyze computational properties of the Diophantine problem
(and its search variant) for spherical equations 
$\prod_{i=1}^m z_i^{-1} c_i z_i = 1$ (and their variants)
over the class of finite metabelian groups 
$G_{p,n}=\MZ_p^n \rtimes \MZ_p^\ast$,
where $n\in\MN$ and $p$ is prime.
We prove that the problem of finding solutions for certain 
constrained spherical equations is computationally hard on average
(assuming that some lattice approximation problem is hard in the worst case).
\end{abstract}

\maketitle

\section{Introduction}

The main goal of this paper is to create bridges
between problems of computational group theory
(namely the problem of finding a solution for a spherical equation)
and assumptions of lattice-based cryptography.

Modern uses of lattices in the design of cryptographic primitives started in 1996 with the paper \cite{Ajtai-1996},
where M. Ajtai introduced a class of random problems
hard for probabilistic polynomial-time (PPT) algorithms
on any cryptographically non-negligible sets of instances
unless certain lattice approximation problems
(such as $\SIVP_\gamma$, see Section \ref{se:lattices})
can be solved efficiently in the worst case (on every input).
Since then Ajtai's construction was studied extensively;
it was improved in different ways, various interesting
applications were found, and it gained a lot of popularity,
see \cite{Peikert:2016} for a survey.
Nowadays, lattice-based cryptography appears to be the most
promising branch of post-quantum cryptography.


\subsection{Equations in groups}

Let $F = F(Z)$ denote the free group on countably many generators 
$Z = \{z_i\}_{i=1}^\infty$. For a group $G$, an 
\emph{equation over $G$ with variables in $Z$} is an equality of 
the form $W = 1$, where $W \in F\ast G$. If 
$W = z_{i_1}^{\varepsilon_1}g_1\cdots z_{i_k}^{\varepsilon_k} g_k$, 
with $z_{i_j}\in Z$, $\varepsilon_j=\pm 1$, and $g_j\in G$,
then we refer to $\{z_{i_1},\ldots,z_{i_k}\}$ as the set of \emph{variables} and 
to $\{g_1,\ldots,g_k\}$ as the set of \emph{constants} (or \emph{coefficients}) of $W$. 
We occasionally write $W(z_1,\ldots,z_k)$ or $W(z_1,\ldots,z_k;g_1,\ldots,g_k)$ 
to indicate that the variables in $W$ are precisely $z_1,\ldots,z_k$ and (in the latter case)
the constants are precisely $g_1,\ldots, g_k$. 
A \emph{solution} for an equation $W(z_1,\ldots,z_k)=1$ over $G$ is
an assignment for variables $z_1,\ldots,z_k$ that makes $W=1$ true. 

In this paper we assume that $G$ comes equipped with a fixed generating set $X$
and elements of $G$ are given as products of elements of $X$ and their inverses.
This naturally defines the length (or size) of the equation $W=1$ as
the length of its left-hand side $W$.

\medskip\noindent
\textbf{The Diophantine problem} over a group $G$ for a class of equations $C$ 
is an algorithmic question to decide whether a given equation $W=1$
in $C$ has a solution. 

\medskip
By definition, the Diophantine problem is a \emph{decision} problem 
(a yes/no question).
Additionally, one can study the corresponding \emph{search} problem that requires
to find a solution for $W=1$, provided one exists.

\subsection{Constrained equations}

A \emph{constrained equation} over a group $G$ 
is an equation $W(z_1,\dots,z_n)=1$
equipped with a set $Z \subseteq G^n$.
The Diophantine problem for a constrained equation requires
to decide whether the following question has a solution or not:
$$
\left\{
\begin{array}{l}
W(z_1,\dots,z_n)=1,\\
(z_1,\dots,z_n)\in Z.
\end{array}
\right.
$$
In this paper we work with a set of the form $Z=Z_1\times\dots\times Z_n$, 
in which case every variable 
$z_i$ is constrained individually by $Z_i$.

Recently, constrained equations in groups have attracted significant attention, 
with most research focusing on free groups, hyperbolic groups, right-angled Artin 
groups, virtually abelian groups, and certain classes of finite groups.
For a survey of recent results in this area see \cite{Ciobanu:2024}.


\subsection{Spherical equations}

One class of equations over groups that has generated much interest 
is the class of \emph{quadratic} equations: equations where each 
variable $z$ appears exactly twice (as either $z$ or $z^{-1}$). 
It was observed in the early 80's \cite{Culler:1981,Schupp:1980} that such equations have an affinity with the theory of compact surfaces (for instance, via their associated van Kampen diagrams).
This geometric point of view sparked the initial interest in their study and
 has led to many interesting results,
particularly in the realm of quadratic equations over free groups: 
solution sets were studied  in \cite{Grigorchuk-Kurchanov:1992}, 
$\NP$-completeness was proved in 
\cite{Diekert-Robson:1999,Kharlampovich-Lysenok-Myasnikov-Touikan:2010}.
Systems of quadratic equations
played an important role in the study of the first order theory of free groups 
(Tarski problem, \cite{Kharlampovich_Myasnikov:1998(1)}).
quadratic equations in various classes of (infinite) groups
such as hyperbolic groups 
(solution sets described in \cite{Grigorchuk-Lysenok:1992},
$\NP$-complete by \cite{Kharlampovich-Taam:2017}),
the first Grigorchuk group
(decidability proved in \cite{Lysenok-Miasnikov-Ushakov:2016},
commutator width computed in \cite{Bartholdi-Groth-Lysenok:2022}),
free metabelian groups
($\NP$-hard by \cite{Lysenok-Ushakov:2015},
in $\NP$ for orientable equations  
by~\cite{Lysenok-Ushakov:2021}),
metabelian Baumslag--Solitar groups
($\NP$-complete by \cite{Mandel-Ushakov:2023}), etc.

We say that equations $W = 1$ and $V=1$ are \emph{equivalent} 
if there is an automorphism $\phi\in \Aut(F\ast G)$ such that 
$\phi$ is the identity on $G$ and $\phi(W) = V$. It is a well 
known consequence of the classification of compact surfaces 
that any quadratic equation over $G$ is equivalent, via an 
automorphism $\phi$, computable in time $O(|W|^2)$, to an equation 
in exactly one of the following three \emph{standard forms} (see 
\cite{Comerford_Edmunds:1981,Grigorchuk-Kurchanov:1992}):
\begin{align}
\prod_{j=1}^m z_j^{-1} c_j z_j&=1 &m\ge 1,\label{eq:spherical}\\
\prod_{i=1}^g[x_i,y_i]\prod_{j=1}^m z_j^{-1} c_j z_j&=1 &g\geq 1, m\geq 0, \label{eq:orientable}\\
\prod_{i = 1}^g x_i^2\prod_{j=1}^m z_j^{-1} c_j z_j&=1 &g \geq 1, m\geq 0.\label{eq:nonorientable}
\end{align}
The number $g$ is the \emph{genus} of the equation, and both $g$ and $m$ 
(the number of constants) are invariants. 
The standard forms are called, respectively, 
\emph{spherical}, \emph{orientable of genus $g$}, and 
\emph{non-orientable of genus $g$}.

In this paper we investigate spherical equations in finite groups. 
We say that the equation \eqref{eq:spherical} is a \emph{homogeneous}
form of a spherical equation. 
For $m\in\MN$ define the set of all homogeneous equations
with $m$ conjugates by
$$
\Sph_m=\Set{\prod_{j=1}^m z_j^{-1} c_j z_j = 1}{c_1,\dots,c_m\in G}
\mbox{ and }
\Sph = \bigcup_{m=1}^\infty \Sph_m.
$$
It is easy to show that an equation of the form
\begin{equation}\label{eq:spherical-in}
\prod_{j=1}^m z_j^{-1} c_j z_j = c
\end{equation}
is equivalent to a spherical equation.
We say that an equation \eqref{eq:spherical-in}
is an \emph{inhomogeneous} form of a spherical equation.

Notice that spherical equations naturally generalize fundamental (Dehn)
problems of group theory, 
as solving equations from $\Sph_1$ 
is the same as solving the word problem
and solving equations from $\Sph_2$ 
is the same as solving the conjugacy problem.
The complexity of solving equations in finite groups has been first studied by 
Goldmann and Russell \cite{Goldmann-Russell:2002} showing that the Diophantine problem 
in a fixed finite nilpotent group can be decided in polynomial time, while it is 
$\NP$-complete in every finite non-solvable group. 
For some recent results, see \cite{Foldvari-Horvath:2020,Idziak-Kawalek-Krzaczkowski-Weiss:2022,Mattes-Ushakov-Weiss:2024}. 

\subsection{Groups under consideration}

Consider the group 
$\MZ_p^n=\Set{(x_1,\ldots,x_n)}{x_i\in\MZ_p}$.
The group of units $\MZ_p^\ast$ acts on $\MZ_p^n$
by (scalar) multiplication
$$
(x_1,\ldots,x_n)
\ \stackrel{\alpha}{\mapsto}\ 
(\alpha x_1,\ldots,\alpha x_n),
$$
where $\alpha\in\MZ_p^\ast$ and $(x_1,\ldots,x_n)\in\MZ_p^n$.
The semidirect product $G = \MZ_p^n\rtimes \MZ_p^\ast$ 
is a set of pairs $(\ovx,\alpha)$ equipped with the binary operation
$$
(\ovx,\alpha)
(\ovy,\beta) = (\ovx+\alpha\ovy,\alpha\cdot\beta),
$$
with the identity $(\ovo,1)$.
The following useful formulae are used throughout the paper
without referencing:
\begin{align*}
(\ovx,\alpha)^{-1} 
&= 
(-\alpha^{-1}\ovx,\alpha^{-1}),\\
(\ovx,\alpha)^{-1}
(\ovy,\beta)
(\ovx,\alpha)
&=
(\alpha^{-1}((\beta-1)\ovx+\ovy),\beta).
\end{align*}

\subsection{Model of computation}

We assume that all computations are performed on a random access machine.
Elements of $\MZ_p$ are given in binary as bit-strings of length
$\lceil \log_2(p) \rceil$.
Elements of $\MZ_p^n\rtimes \MZ_p^\ast$ are given 
as $n+1$-tuples of elements from $\MZ_p$.
If $f(n)$ is $O(T(n)\cdot n^{\varepsilon})$ for every $\varepsilon>0$, then
we say that a function $f(n)$ is ``nearly $T(n)$'' 
and write $f(n)$ is $\Tilde{O}(T(n))$.
Operations in $\MZ_p$ have the following complexity:
\begin{itemize}
\item 
Addition and subtraction can be done in $O(\lceil \log_2(p) \rceil)$
time in a straightforward way.
\item
Multiplication can be done in 
$\Tilde{O}(\lceil \log_2(p) \rceil)$ time using fast Fourier transform.
\item
Computing the multiplicative inverse of a unit modulo $p$ can be done in 
$O(\lceil \log_2(p) \rceil^2)$ time using the extended Euclidean algorithm.
\end{itemize}

\section{Preliminaries: subset sum problem in groups}

Here we review several useful definitions of discrete optimization in groups.
Let $G$ be a group generated by a finite set 
$X=\{x_1,\ldots,x_n\}\subseteq G$. Elements in $G$ can be expressed
as products of the generators in $X$ and their inverses.
Hence, we can state the following combinatorial problem.

\medskip
\noindent{\bf The subset sum  problem $\SSP(G,X)$\index{$\SSP(G,X)$}:}
Given $g_1,\ldots,g_k,g\in G$ decide if
  \begin{equation} \label{eq:SSP-def}
  g = g_1^{\varepsilon_1} \cdots g_k^{\varepsilon_k}
  \end{equation}
for some $\varepsilon_1,\ldots,\varepsilon_k \in \{0,1\}$.

\medskip
By \cite[Proposition 2.5]{Miasnikov-Nikolaev-Ushakov:2015},
computational properties of $\SSP$ do not depend on the choice of a finite generating set $X$
and, hence, the problem can be abbreviated as $\SSP(G)$.
The same paper, \cite{Miasnikov-Nikolaev-Ushakov:2015},
provides a variety of examples
of groups with $\NP$-complete (or polynomial time) subset sum problems.

Consider the infinitely generated group $\MZ_3^\omega$ whose elements 
can be formally viewed as functions $f\colon\MN\to\MZ_3$ with finite support.
For algorithmic purposes, we assume that elements of $\MZ_3^\omega$
are encoded by finite ternary strings 
(as in \cite[Section 4]{Miasnikov-Nikolaev-Ushakov:2014b}).
\cite[Proposition 2.1]{Nikolaev-Ushakov:2020} proves 
that $\SSP(\MZ_3^\omega)$ is $\NP$-complete, 
which can be reformulated as follows.

\begin{prop}[{{\cite[Proposition 2.1]{Nikolaev-Ushakov:2020}}}]\label{pr:SSP_Z3_omega}
For every $m\ge 3$, $\SSP$ is $\NP$-complete for the class 
of finite groups $\{\MZ_m^n\}_{n=1}^\infty$.
\end{prop}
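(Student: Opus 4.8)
The plan is to establish, for each fixed $m\ge 3$, both that $\SSP$ lies in $\NP$ and that it is $\NP$-hard, uniformly over the family $\{\MZ_m^n\}_{n=1}^\infty$. Membership is immediate: since $\MZ_m^n$ is abelian, the instance $g=g_1^{\varepsilon_1}\cdots g_k^{\varepsilon_k}$ simply asks whether $\sum_{i=1}^k\varepsilon_i g_i=g$ for some $\varepsilon\in\{0,1\}^k$, and a guessed tuple $\varepsilon$ is checked by adding at most $k$ vectors coordinatewise modulo $m$, in time polynomial in the input. So $\varepsilon$ is a valid witness and $\SSP\in\NP$.

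For hardness I would reduce from positive (monotone) one-in-three SAT, which is $\NP$-complete: one is given clauses $C_1,\dots,C_M$, each an unordered triple of distinct variables drawn from $\{x_1,\dots,x_N\}$, and asks for a $0/1$ assignment making exactly one variable of every clause true. Given such an instance, set $n=M$ and, for each variable $x_i$, let $g_i\in\MZ_m^n$ be the indicator vector of the clauses containing $x_i$, i.e.\ $(g_i)_\ell=1$ when $x_i\in C_\ell$ and $0$ otherwise; take the target $g=(1,\dots,1)$. The map is plainly computable in polynomial time, and $\varepsilon\in\{0,1\}^N$ in the resulting subset-sum instance will play the role of the truth assignment.

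The crux is the equivalence: $\sum_{i=1}^N\varepsilon_i g_i=(1,\dots,1)$ in $\MZ_m^n$ holds if and only if the assignment $x_i\mapsto\varepsilon_i$ satisfies every clause in the one-in-three sense. Reading the $\ell$-th coordinate, the equation says $\sum_{i:\,x_i\in C_\ell}\varepsilon_i\equiv 1\pmod m$, and the left-hand side is exactly the number of true variables in $C_\ell$. Since each clause has exactly three variables, this count lies in $\{0,1,2,3\}$, and because $m\ge 3$ the only value in that range congruent to $1$ modulo $m$ is $1$ itself. Hence the congruence holds in every coordinate precisely when each clause has exactly one true variable, which gives the equivalence and completes the reduction; combined with membership in $\NP$, this proves $\NP$-completeness.

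The step requiring care — and the reason $m\ge 3$ is exactly the right hypothesis — is the control of wrap-around modulo $m$ in the backward direction. The argument works because the per-clause count of true variables can never reach $m+1$ (it is at most $3\le m$), so a residue of $1$ forces a true count of $1$ and no spurious solutions arise; this is precisely what makes the positive one-in-three structure, rather than an unrestricted subset-sum encoding with carries, the convenient source problem. The threshold is sharp: for $m=2$ the scalars $\{0,1\}$ coincide with $\MZ_2$, and deciding $\sum_i\varepsilon_i g_i=g$ becomes a linear system over $\MZ_2$, solvable in polynomial time by Gaussian elimination, so the statement genuinely requires $m\ge 3$.
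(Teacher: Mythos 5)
Your proof is correct, but note that the paper itself contains no proof of this proposition to compare against: it is stated as a citation, obtained by ``reformulating'' the result of Nikolaev--Ushakov that $\SSP(\MZ_3^\omega)$ is $\NP$-complete (elements of $\MZ_3^\omega$ have finite support, so its instances are in effect instances of $\SSP(\MZ_3^n)$ for some $n$). What you supply is therefore a genuine, self-contained argument where the paper defers to the literature. Your route is sound: membership in $\NP$ via the obvious witness, and hardness via a reduction from monotone one-in-three SAT in which the $\ell$-th coordinate of the sum counts the true variables of clause $C_\ell$; since each clause has exactly three variables, that count lies in $\{0,1,2,3\}$, and for $m\ge 3$ the only element of this set congruent to $1$ modulo $m$ is $1$ itself (as $2\equiv 1$ would force $m\mid 1$ and $3\equiv 1$ would force $m\mid 2$), so solutions of the subset-sum instance are exactly the one-in-three assignments and no wrap-around can create spurious solutions. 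Two further points in your favor. First, your argument handles all $m\ge 3$ uniformly, which is cleaner than the paper's implicit claim that the $\MZ_3^\omega$ result ``can be reformulated'' for arbitrary $m\ge 3$: that generalization is not a literal restatement of the cited proposition but requires rerunning a reduction of exactly the kind you give. Second, your sharpness observation at $m=2$ is correct and worth keeping: there the coefficient set $\{0,1\}$ coincides with $\MZ_2$, so the problem degenerates to solving a linear system over $\MZ_2$ by Gaussian elimination, confirming that the hypothesis $m\ge 3$ cannot be dropped.
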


\section{Preliminaries: lattice problems}

\subsection{Lattices}\label{se:lattices}

Here we review several definitions of the theory of lattices.
Recall that a set of points $S\subseteq \MR^n$ is 
\emph{discrete} if every point $x\in\MR^n$
has an $\varepsilon$-neighborhood
that contains $x$ only.
A \emph{lattice} is a discrete subgroup of $\MR^n$.
An \emph{integer lattice} is a subgroup of $\MZ^n$.
We discuss integer lattices only.
We say that $\CL\le \MZ^n$ is a \emph{full-rank} lattice 
if the dimension of the corresponding vector space $\Span(\CL)$
is $n$ (we can simply write $\dim(\CL)=n$).
We typically assume that $\CL$ is a full-rank lattice.

The \emph{minimum distance} of a lattice $\CL\le\MR^n$
is the length of a shortest nonzero lattice vector
$$
\lambda_1(\CL) = \min_{\ovv\in\CL\setminus\{\ovo\}} \|\ovv\|.
$$
More generally, for $i=1,\ldots,n$, the $i$th
\emph{successive minimum} of $\CL$ is 
$$
\lambda_i(\CL) = \min\left\{r\mid \dim(\Span(B(r)\cap\CL))\ge i\right\},
$$
which is the minimum radius of a ball that contains at least $i$
linearly independent points.
Below we recall several computational problems for lattices.
Some of them are of direct importance to lattice-based cryptography
(e.g., $\SIVP_\gamma$ and $\GAPSVP_\gamma$) and others have
more historical importance.

\medskip\noindent
\textbf{Shortest vector problem}, $\SVP$.
Given a basis of a lattice $\CL\le \MZ^n$,
find a shortest nonzero vector $\ovv \in \CL$, i.e., 
a vector satisfying $\|\ovv\| = \lambda_1(\CL)$.

\medskip\noindent
\textbf{Approximate shortest vector problem}, $\SVP_\gamma$.
Given a basis of a lattice $\CL\le \MZ^n$,
find a nonzero vector $\ovv \in \CL$ satisfying
$\|\ovv\|\le \gamma\cdot \lambda_1(\CL)$.

\medskip\noindent
\textbf{Decisional approximate SVP}, $\GAPSVP_\gamma$.
Given a basis of a lattice $\CL$,
where either $\lambda_1(\CL)\le 1$ or $\lambda_1(\CL)>\gamma$,
decide which is the case.

\medskip\noindent
\textbf{Shortest independent vector problem}, $\SIVP$.
Given a basis of a full-rank lattice $\CL\le \MZ^n$,
find $n$ linearly independent vectors $\ovv_1,\dots,\ovv_n\in \CL$
satisfying $\max \|\ovv_i\| \le \lambda_n(\CL)$.

\medskip\noindent
\textbf{Approximate shortest independent vectors problem}, $\SIVP_\gamma$.
Given a basis of a full-rank lattice $\CL\le \MZ^n$,
find $n$ linearly independent vectors $\ovv_1,\dots,\ovv_n\in \CL$
satisfying
$$
\max \|\ovv_i\| \le \gamma(n) \cdot \lambda_n(\CL).
$$

\medskip
Formally, we say that $\SIVP_{\gamma}$ is hard in the worst case
for probabilistic polynomial-time (PPT) algorithms,
if for every PPT algorithm $\CA$
and for every $n\in\MN$ there is an $n$-dimensional basis
$B_{n,\CA}=\{\ovb_1,\dots,\ovb_n\}\subseteq \MZ^n$  satisfying
\begin{equation}\label{eq:SIVP-hardness}
\Pr[\CA \mbox{ solves } \SIVP_{\gamma} \mbox{ for } B_{n,\CA}] 
\ \ \mbox{ is }\ \  o(n^{-d}),
\end{equation}
for every $d>0$, where the probability is taken over the coin-tosses of $\CA$.

%

The above lattice problems have been intensively studied and appear to be
intractable, except for very large approximation factors $\gamma(n)$.
Known polynomial-time algorithms like the
Lenstra--Lenstra--Lov\'asz \cite{Lenstra-Lenstra-Lovasz:1982}
and its descendants obtain only slightly subexponential approximation factors 
for all the above problems.
Known algorithms that obtain polynomial
approximation factors, such as \cite{Kannan:1983, Ajtai-Kumar-Sivakumar:2001, Micciancio-Voulgaris:2010, Aggarwal-Dadush-Regev-Stephens-Davidowitz:2015},
either require superexponential time, or exponential time and space.

Many lattice problems are $\NP$-hard, even to approximate to within various sub-polynomial $n^{o(1)}$ approximation factors.
However, such hardness is not of any direct consequence to cryptography, 
since lattice-based cryptographic constructions so far rely on polynomial
approximation problems factors $\gamma(n)\ge n$.
Indeed, there is evidence that for factors 
$\gamma(n) \ge \sqrt{n}$, the lattice
problems relevant to cryptography are not $\NP$-hard, because they lie in 
$\NP \cap \coNP$ \cite{Goldreich-Goldwasser:2000, Aharonov-Regev:2005}.
For a survey on lattice-based cryptography see \cite{Peikert:2016}.

\subsection{Short integer solution problem}

In general, the \emph{short integer solution} problem
can be formulated as follows.

\medskip\noindent
\textbf{Short integer solution ($\SIS$) problem.}
For a given matrix $A\in\MZ_p^{n\times m}$,
find $\ovx\in\MZ_p^m$ satisfying
\begin{equation}\label{eq:SIS}
\left\{
\begin{array}{l}
A\ovx\equiv_p \ovo,\\
\ovx \mbox{ is short and nontrivial},\\
\end{array}
\right.
\end{equation}
assuming that a solution exists.

\medskip
In other words, $\SIS$ requires to find 
a short and nontrivial solution for a homogeneous system of 
linear congruences $A\ovx\equiv_p \ovo$. 
In the original paper \cite{Ajtai-1996}, 
$\ovx$ was called short if $\|\ovx\|_2\le n$
(can be relaxed to $\|\ovx\|_2\le poly(n)$).
In \cite{Goldreich-Goldwasser-Halevi:1997}, 
instead of $\|\ovx\|_2\le n$, 
the following two constraints on $\ovx$ were discussed:
\begin{itemize}
\item 
$\ovx\ne 0$ and $x_i\in \{0,1\}$;
\item 
$\ovx\ne 0$ and $x_i\in \{-1,0,1\}$.
\end{itemize}
We denote the corresponding versions of $\SIS$ by
$\SIS_{\{0,1\}}$ and $\SIS_{\{-1,0,1\}}$.
To discuss the average case complexity of $\SIS$ we use 
the uniform distribution on its instances.

\medskip\noindent
\textbf{Randomized $\SIS$ problem}.
For a matrix  $A\in \MZ_p^{n\times m}$
sampled uniformly randomly,
find $\ovx\in\MZ_p^m$ such that
$A\ovx=\ovo$, and
$\ovx$ is short and nontrivial.

\begin{thm}[{{Goldreich, Goldwasser, Halevi, \cite[Theorem 1]{Goldreich-Goldwasser-Halevi:1997}, cf. Ajtai, \cite[Theorem 1]{Ajtai-1996}}}]
\label{th:Ajtai}
Suppose that a PPT algorithm $\CA$ solves
the randomized $\SIS_{\{0,1\}}$ problem 
(or $\SIS_{\{-1,0,1\}}$ problem)
with parameters $n,m,p$ satisfying
\begin{equation}\label{eq:mnp-conditions}
n\log(p) < m < \tfrac{p}{2n^4}
\ \mbox{ and }\ 
p=O(n^c) \mbox{ for some }c>0,
\end{equation}
with probability at least $n^{-c_0}$ for some fixed constant $c_0>0$,
where the probability is taken over the choice of 
the instance as well as the coin-tosses of $\CA$.
Then there is a PPT algorithm
that solves 
$\GAPSVP_{\gamma=pn^6}$ and $\SIVP_{\gamma=pn^6}$ (among others)
on every $n$-dimensional lattice with probability at least $1-2^{-n}$.
\footnote{For better bounds and approximation factors of $\gamma$ see 
\cite{Micciancio-Regev:2007}.}
\end{thm}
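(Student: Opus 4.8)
The plan is to establish a worst-case to average-case reduction: assuming the PPT algorithm $\CA$ solves the randomized $\SIS$ problem on a uniformly random $A\in\MZ_p^{n\times m}$ with probability at least $n^{-c_0}$, I would build a PPT algorithm $\CA'$ that, on an \emph{arbitrary} $n$-dimensional lattice $\CL$, finds short vectors and thereby solves $\SIVP_\gamma$ and $\GAPSVP_\gamma$. The first reduction is a routine amplification: since a single invocation of $\CA$ succeeds with inverse-polynomial probability, running it $\mathrm{poly}(n)$ times independently and taking the best output boosts the success probability to $1-2^{-n}$, matching the conclusion. The substance lies in manufacturing, from the fixed worst-case lattice $\CL$, average-case $\SIS$ instances on which $\CA$ is obligated to succeed, and then converting $\CA$'s answer back into a short lattice vector.

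First I would fix the instance-generation procedure. Given a basis of $\CL$ (after a preprocessing step that locates the length scale $\lambda_n(\CL)$ up to a constant factor), sample $m$ lattice points perturbed by small noise, $\ovv_1,\dots,\ovv_m$, and reduce their coordinates modulo $p$ to obtain the columns of a matrix $A\in\MZ_p^{n\times m}$. The key claim, and the main obstacle of the whole argument, is that under the parameter constraints $n\log(p)<m<\tfrac{p}{2n^4}$ the distribution of $A$ produced this way is statistically close to the uniform distribution on $\MZ_p^{n\times m}$. This is proved by a counting argument showing that the map sending sampled points to their residues is nearly balanced: $m>n\log(p)$ forces enough points that every residue class is hit, guaranteeing that short collisions (hence short $\SIS$ solutions) exist, while $m<\tfrac{p}{2n^4}$ keeps the images from clustering so that no residue is over-represented. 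Because $\CA$ is only guaranteed to work on (near-)uniform inputs, getting this statistical-closeness bound right is exactly what the stated inequalities on $m,n,p$ buy us, and it is the most delicate part of the proof.

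With a near-uniform $A$ in hand, I would feed it to $\CA$ and obtain a short nontrivial $\ovx\in\{-1,0,1\}^m$ (resp.\ $\{0,1\}^m$) with $A\ovx\equiv_p\ovo$, and then form the integer combination $\ovv=\sum_{i=1}^m x_i\ovv_i$. The congruence $A\ovx\equiv_p\ovo$ forces the modular error terms to cancel, so $\ovv$ lies in $\CL$; and since each $x_i\in\{-1,0,1\}$ and each $\ovv_i$ has controlled norm, $\ovv$ is short, of length bounded by the quantity that ultimately yields the factor $\gamma=pn^6$. The remaining points are to argue $\ovv\ne\ovo$ (using that $\ovx$ is nontrivial together with the genericity of the perturbations, so that a small nonzero combination almost never vanishes) and to promote a single short vector to a full solution of $\SIVP_\gamma$: I would iterate the short-vector routine inside quotient and sublattice constructions to extract $n$ linearly independent short vectors, carefully tracking the length bounds to keep the approximation factor at $pn^6$, and read off the $\GAPSVP_\gamma$ decision from the length of the vector returned. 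Throughout, the efficiency bookkeeping relies on the $\MZ_p$-operation costs recorded in the model of computation.
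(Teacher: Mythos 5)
You should first note a structural fact: the paper does \emph{not} prove Theorem~\ref{th:Ajtai}. It is imported as a black box, attributed to Goldreich--Goldwasser--Halevi \cite{Goldreich-Goldwasser-Halevi:1997} and Ajtai \cite{Ajtai-1996}, and the paper's own contributions begin strictly downstream of it (the corollary transferring the statement to $\ISIS_{\{0,1\}}$, Theorem~\ref{th:CISE-hard}, and Proposition~\ref{pr:one-way}). So there is no in-paper proof to compare yours against; your text has to be judged as a reconstruction of the external worst-case to average-case reduction, which occupies many pages in the cited papers.

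Judged on those terms, your outline has the right architecture (amplify the inverse-polynomial success probability; manufacture near-uniform $\SIS$ instances from a worst-case lattice by sampling perturbed lattice points and reducing coordinates modulo $p$; map a short solution $\ovx$ back to a lattice vector $\sum_i x_i \ovv_i$; extract $n$ independent short vectors), but it contains concrete errors and omissions at exactly the technical core. First, you misassign the roles of the two inequalities in \eqref{eq:mnp-conditions}: the lower bound $m > n\log(p)$ is a pigeonhole condition ($2^m > p^n$) guaranteeing that a $\{0,1\}$-solution (equivalently, a collision) \emph{exists} at all, while the near-uniformity of the generated matrix $A$ comes from the smoothing properties of the sampling procedure, not from the size of $m$; and the upper bound $m < \tfrac{p}{2n^4}$ is not about residues ``clustering'' but is what makes the reduction \emph{contract}: the recovered lattice vector has norm on the order of $\tfrac{m}{p}\max_i\|\ovv_i\|$ plus rounding error, and only when $p \gg m n^{O(1)}$ is this a definite factor shorter than the vectors one started with. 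Second, because of this contraction structure the argument cannot be run ``single shot'' as your last paragraph suggests: the reduction must maintain a set of $n$ linearly independent lattice vectors and iteratively replace the longest one by the newly found shorter vector, preserving independence, until the maximum norm is within the factor $\gamma = pn^6$ of $\lambda_n(\CL)$; your appeal to ``quotient and sublattice constructions'' is not the mechanism used and is left entirely unspecified. Third, the nonvanishing of $\sum_i x_i\ovv_i$ is not a genericity remark: it requires the conditional-entropy argument that the solver, seeing only residues, cannot determine the actual sampled preimages, so its output combination is nonzero with constant probability. These three points are the substance of Ajtai's proof; as written, your proposal names the scaffolding but does not supply the load-bearing steps.
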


In a similar way we can define deterministic and randomized 
\emph{inhomogeneous short integer solution} (ISIS) problem. 

\medskip\noindent
\textbf{Randomized $\ISIS$ problem}.
For a matrix  $A\in \MZ_p^{n\times m}$ and a vector $\ovy\in\MZ_p^n$
sampled uniformly randomly,
find $\ovx\in\MZ_p^m$ such that
$A\ovx=\ovy$, and
$\ovx$ is short.

\begin{cor}
The statement of Theorem \ref{th:Ajtai} holds for 
the randomized $\ISIS_{\{0,1\}}$ problem.
\end{cor}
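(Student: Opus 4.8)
The plan is to reduce the randomized $\SIS_{\{-1,0,1\}}$ problem to the randomized $\ISIS_{\{0,1\}}$ problem and then invoke Theorem \ref{th:Ajtai}, which already covers the $\{-1,0,1\}$ version. Suppose $\CA'$ is a PPT algorithm solving randomized $\ISIS_{\{0,1\}}$ with parameters $n,m,p$ as in \eqref{eq:mnp-conditions} with probability at least $n^{-c_0}$. I would build a solver $\CA$ for randomized $\SIS_{\{-1,0,1\}}$ as follows: on a uniformly random input $A\in\MZ_p^{n\times m}$, sample $\ovb\in\{0,1\}^m$ uniformly, set $\ovy:=A\ovb$, run $\CA'(A,\ovy)$ to obtain $\ovx\in\{0,1\}^m$, and output $\ovz:=\ovx-\ovb$. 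Whenever $\CA'$ succeeds we have $A\ovx=\ovy=A\ovb$, hence $A\ovz\equiv_p\ovo$ and $\ovz\in\{-1,0,1\}^m$, so $\ovz$ is a valid $\SIS_{\{-1,0,1\}}$ solution as soon as $\ovz\ne\ovo$.

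Two things must be checked. First, the instance $(A,\ovy)$ handed to $\CA'$ must be distributed essentially as a genuine randomized $\ISIS$ instance, i.e. with $A$ uniform and $\ovy$ uniform. Since $A$ is untouched and $\ovy=A\ovb$ with $\ovb$ uniform over $\{0,1\}^m$, the leftover hash lemma shows that $\ovy$ is within statistical distance $\epsilon$ of uniform over $\MZ_p^n$ provided $m$ exceeds $n\log_2(p)$ by a sufficient margin; under the hypotheses \eqref{eq:mnp-conditions} (with $p=O(n^c)$ and room to take $m\ge n\log_2(p)+\omega(\log n)$ inside the admissible window $n\log(p)<m<p/(2n^4)$) one gets $\epsilon$ negligible. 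Statistical closeness then guarantees that $\CA'$ still succeeds on $(A,\ovy)$ with probability at least $n^{-c_0}-\epsilon$.

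Second, I must ensure the output is nontrivial, i.e. $\ovz\ne\ovo$, equivalently $\ovx\ne\ovb$. Conditioned on the pair $(A,\ovy)$, the vector $\ovb$ is uniform over the preimage set $S_\ovy=\{\ovb'\in\{0,1\}^m : A\ovb'=\ovy\}$, while the oracle's answer $\ovx$ depends only on $(A,\ovy)$; hence $\Pr[\ovx=\ovb\mid A,\ovy]\le 1/|S_\ovy|$. Averaging over $\ovy$, weighted by $\Pr[\ovy]=|S_\ovy|/2^m$, collapses to $\Pr[\ovx=\ovb]\le p^n/2^m$, which is small because $m>n\log_2(p)$ by the chosen margin. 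Combining the two estimates, $\CA$ solves randomized $\SIS_{\{-1,0,1\}}$ with probability at least $n^{-c_0}-\epsilon-p^n/2^m\ge n^{-c_0'}$ for a suitable constant $c_0'$, and Theorem \ref{th:Ajtai} applied to this $\SIS_{\{-1,0,1\}}$ solver then yields PPT algorithms for $\GAPSVP_{\gamma}$ and $\SIVP_{\gamma}$ with $\gamma=pn^6$ succeeding on every $n$-dimensional lattice with probability at least $1-2^{-n}$.

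The step I expect to be most delicate is the bookkeeping of parameters: the leftover hash lemma and the nontriviality bound $p^n/2^m$ both demand that $m$ sit a little above $n\log_2(p)$, so I must confirm that such an $m$ still fits the window $n\log(p)<m<p/(2n^4)$ of \eqref{eq:mnp-conditions} (reconciling the logarithm base and absorbing the $\omega(\log n)$ slack), and that the degradation from $n^{-c_0}$ to $n^{-c_0'}$ keeps the success probability inverse-polynomial, as required by the hypothesis of Theorem \ref{th:Ajtai}.
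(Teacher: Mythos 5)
Your reduction runs in the right direction (build a $\SIS$ solver from the hypothesized $\ISIS_{\{0,1\}}$ solver, then invoke Theorem \ref{th:Ajtai}), and your two estimates --- the leftover-hash-lemma bound $\tfrac12\sqrt{p^n/2^m}$ on the distance of $(A,A\ovb)$ from $(A,\mathrm{uniform})$, and the nontriviality bound $\Pr[\ovx=\ovb]\le p^n/2^m$ --- are both computed correctly. The gap is exactly in the step you flagged as delicate: you treat the margin $m\ge n\log_2(p)+\omega(\log n)$ as something you may arrange (``room to take $m$ inside the admissible window''), but the parameters $n,m,p$ are not yours to choose. The hypothesis of the corollary hands you an $\ISIS_{\{0,1\}}$ solver for some fixed admissible pair $(m(n),p(n))$, and the conclusion must follow for that choice; this is also how the corollary is consumed later, since the proof of Theorem \ref{th:CISE-hard} produces $\ISIS$ instances with whatever admissible parameters the $\CISE$ instance came with. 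For a family sitting at the bottom of the window, say $m=\lceil n\log p\rceil+1$, your error terms $\tfrac12\sqrt{p^n/2^m}$ and $p^n/2^m$ are constants rather than negligible (and if the $\log$ in \eqref{eq:mnp-conditions} is read as the natural logarithm, one can even have $2^m<p^n$, in which case $A\ovb$ cannot be close to uniform at all); the guarantee $n^{-c_0}-\epsilon-p^n/2^m$ is then vacuous and the argument collapses. So your proof establishes the corollary only for parameter families with an extra logarithmic margin on $m$, which is a strictly weaker statement than what is claimed and needed.

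The paper avoids this entirely with a lossless reduction valid for every admissible choice: given a uniform $\SIS_{\{0,1\}}$ instance $A$ with columns $\ovv_1,\dots,\ovv_m$, fix (or guess) an index $i$, delete the $i$th column to obtain $A'$, and feed the instance $(A',-\ovv_i)$ to the $\ISIS$ solver. Since $A$ is uniform, $(A',-\ovv_i)$ is \emph{exactly} a uniform $\ISIS_{\{0,1\}}$ instance, and inserting the entry $1$ at position $i$ into any $\{0,1\}$ solution of $A'\ovx'=-\ovv_i$ yields a solution of $A\ovx=\ovo$ that is automatically nonzero because $x_i=1$. No statistical-distance argument and no collision bound are needed, and the success probability $n^{-c_0}$ is preserved exactly. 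To salvage your randomized-target approach you would have to add the margin on $m$ as an explicit hypothesis; otherwise, an exact reduction of the column-deletion kind is required.
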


\begin{proof}
Indeed, suppose that there is a PPT algorithm $\CA$ that solves
$\ISIS_{\{0,1\}}$ problem with probability at least $n^{-c_0}$.
Then for an instance $A$ of $\SIS_{\{0,1\}}$ 
with columns $\ovv_1,\dots,\ovv_m$ 
\begin{enumerate}
\item 
``guess'' a nonzero index $i$ in a solution $\ovx\in\{0,1\}^m$,
\item 
form an instance $(A',-\ovv_i)$, where
$A'$ is obtained by deleting the $i$th column from $A$,
\item 
solve $(A',-\ovv_i)$ using $\CA$.
\end{enumerate}
This gives a PPT algorithm that solves $\SIS_{\{0,1\}}$
with probability at least $n^{-c_0}$.
\end{proof}

\section{Spherical equations over $G_{p,n}$}

In this section we analyse complexity of solving a spherical equation
\eqref{eq:spherical}
over $G_{p,n}$, with constants $c_i=(\ovc_i,\beta_i)\in G_{p,n}$ 
and unknowns $z_1,\dots, z_m$.

\begin{lem}\label{le:main-conditions}
$z_i=(\ovz_i,\alpha_i)$ satisfy 
\eqref{eq:spherical}
$\ \ \Leftrightarrow\ \ $
the following two conditions are satisfied:
\begin{itemize}
\item[(S1)]
$\beta_1\cdots\beta_m=1$;
\item[(S2)]
$\sum_{i=1}^m
B_i\alpha_i^{-1}
((\beta_i-1) \ovz_i+\ovc_i)
\equiv_p \ovo$,
where $B_i=\beta_1\cdots\beta_{i-1}$.
\end{itemize}
\end{lem}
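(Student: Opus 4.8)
The plan is to evaluate the left-hand side of \eqref{eq:spherical} explicitly as a single pair in $\MZ_p^n\rtimes\MZ_p^\ast$ and then compare it componentwise with the identity $(\ovo,1)$; since every manipulation is an equivalence, the resulting componentwise conditions will be equivalent to solvability. First I would rewrite each conjugate factor using the conjugation formula recorded in the excerpt. Writing $z_i=(\ovz_i,\alpha_i)$ and $c_i=(\ovc_i,\beta_i)$, that formula gives
\begin{equation*}
z_i^{-1}c_iz_i=(\ovz_i,\alpha_i)^{-1}(\ovc_i,\beta_i)(\ovz_i,\alpha_i)=\bigl(\alpha_i^{-1}((\beta_i-1)\ovz_i+\ovc_i),\ \beta_i\bigr).
\end{equation*}
Thus each factor is a pair whose $\MZ_p^\ast$-component is simply $\beta_i$ and whose $\MZ_p^n$-component I abbreviate as $d_i=\alpha_i^{-1}((\beta_i-1)\ovz_i+\ovc_i)$.

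The core step is a closed formula for a product of such pairs, which I would prove by induction on $m$ using the multiplication rule $(\ovx,\alpha)(\ovy,\beta)=(\ovx+\alpha\ovy,\alpha\beta)$. The claim is
\begin{equation*}
\prod_{i=1}^m (d_i,\beta_i)=\Bigl(\sum_{i=1}^m B_i\, d_i,\ \beta_1\cdots\beta_m\Bigr),\qquad B_i=\beta_1\cdots\beta_{i-1}
\end{equation*}
(with the empty product $B_1=1$). The base case $m=1$ is immediate. For the inductive step, multiplying the running product $\bigl(\sum_{i=1}^{m-1}B_i d_i,\ \beta_1\cdots\beta_{m-1}\bigr)$ on the right by $(d_m,\beta_m)$ and applying the multiplication rule scales $d_m$ by $\beta_1\cdots\beta_{m-1}=B_m$ and multiplies the second coordinates, which is exactly what the formula predicts.

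Combining the two steps, the left-hand side of \eqref{eq:spherical} equals $\bigl(\sum_{i=1}^m B_i\alpha_i^{-1}((\beta_i-1)\ovz_i+\ovc_i),\ \beta_1\cdots\beta_m\bigr)$. Setting this equal to the identity $(\ovo,1)$ and reading off the two coordinates yields precisely (S1) from the $\MZ_p^\ast$-component and (S2) from the $\MZ_p^n$-component, and conversely any $(\ovz_i,\alpha_i)$ satisfying (S1)--(S2) makes the product equal the identity. I do not expect a genuine obstacle here, as the argument is a direct computation; the only point requiring care is the index bookkeeping in the product formula, namely verifying that the coefficient attached to the $i$th conjugate is the partial product $B_i=\beta_1\cdots\beta_{i-1}$ rather than $\beta_1\cdots\beta_i$, which is what makes the inductive step close correctly.
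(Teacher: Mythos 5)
Your proposal is correct and follows exactly the direct computation that the paper summarizes as ``straightforward verification'': conjugation formula per factor, an inductive product formula yielding the coefficient $B_i=\beta_1\cdots\beta_{i-1}$, and componentwise comparison with the identity $(\ovo,1)$ to read off (S1) and (S2). Your index bookkeeping in the inductive step is right, so the argument is complete.
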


\begin{proof}
Straightforward verification.
\end{proof}

\subsection{Spherical equations over $G_{p,n}$: generic-case}

Here, we investigate generic-case hardness for spherical equations,
i.e., hardness of a ``typical'' equation, see \cite[Chapter 10]{MSU_book:2011} 
for basic definitions of generic-case complexity.

\begin{prop}\label{pr:beta-ne-one}
If $\beta_i\ne 1$ for some $i$, then \eqref{eq:spherical} has a solution 
and a solution can be found in polynomial time.
\end{prop}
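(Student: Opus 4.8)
The plan is to produce a solution by inspection of the two conditions in Lemma~\ref{le:main-conditions}. By that lemma, a tuple $(\ovz_i,\alpha_i)$ solves~\eqref{eq:spherical} exactly when (S1) $\beta_1\cdots\beta_m=1$ and (S2) $\sum_{i=1}^m B_i\alpha_i^{-1}\big((\beta_i-1)\ovz_i+\ovc_i\big)\equiv_p\ovo$ both hold. Condition~(S1) is independent of the unknowns, so it is a necessary precondition that is checked in polynomial time by multiplying the given scalars $\beta_i$; I read the proposition under this precondition (note that for $m=1$ it forces $\beta_1=1$, so the hypothesis $\beta_1\ne1$ is then vacuous, consistent with $z^{-1}cz=1$ having no solution when $c\ne1$). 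Granting~(S1), all of the work goes into realizing the single vector congruence~(S2), and the idea is to let one cleverly chosen variable absorb the contribution of all the others.

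First I would fix an index $k$ with $\beta_k\ne1$. Since $p$ is prime, $\MZ_p$ is a field, so $\beta_k-1\ne0$ is invertible; moreover $B_k=\beta_1\cdots\beta_{k-1}$ is a product of units and hence also invertible. I would then set the remaining unknowns trivially, taking $\alpha_i=1$ and $\ovz_i=\ovo$ for every $i\ne k$, and $\alpha_k=1$. With these choices each term of~(S2) with $i\ne k$ collapses to $B_i\ovc_i$, so, writing $\ovs=\sum_{i\ne k}B_i\ovc_i$, condition~(S2) reduces to the single equation $B_k\big((\beta_k-1)\ovz_k+\ovc_k\big)\equiv_p-\ovs$ in the unknown vector $\ovz_k$. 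Solving over the field $\MZ_p$ gives the explicit value $\ovz_k\equiv_p(\beta_k-1)^{-1}\big(-B_k^{-1}\ovs-\ovc_k\big)$, which together with the trivial assignments above is a genuine solution of~\eqref{eq:spherical}.

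Finally I would verify the complexity claim: finding an index $k$ with $\beta_k\ne1$ is a linear scan, forming $\ovs$ is a sum of $m$ scalar-vector products, and computing $(\beta_k-1)^{-1}$ and $B_k^{-1}$ amounts to a couple of modular inversions, so the construction uses only a polynomial number of $\MZ_p$-operations. I do not expect a real obstacle in the argument; the only points that genuinely need care are that $p$ is prime, which is precisely what makes $\beta_k-1$ invertible rather than a zero divisor (over a non-field base $(\beta_k-1)\ovz_k$ might not reach the required value), and that~(S1) is a standing precondition. The substance of the statement is thus that as soon as some coefficient satisfies $\beta_k\ne1$, a single variable suffices to solve the equation irrespective of the other constants; the complementary regime in which every $\beta_i=1$ removes exactly this freedom, since then all the coefficients $\beta_i-1$ vanish, and that is where I would expect the real difficulty to lie.
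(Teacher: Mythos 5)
Your proof is correct and essentially identical to the paper's: the paper likewise invokes Lemma~\ref{le:main-conditions}, checks (S1) as a precondition, sets $\alpha_j=1$ for all $j$ and $\ovz_j=\ovo$ for $j\ne i$, and solves (S2) for the one remaining vector, which is exactly the element your formula gives, since $(\beta_k-1)^{-1}\bigl(-B_k^{-1}\ovs-\ovc_k\bigr)\equiv_p -\tfrac{1}{B_k(\beta_k-1)}\sum_{j=1}^m B_j\ovc_j$. Incidentally, your sign is the correct one --- the paper's displayed value $\ovz_i\equiv_p \tfrac{1}{B_i(\beta_i-1)}\sum_{j=1}^m B_j\ovc_j$ is missing a minus sign, a harmless typo.
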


\begin{proof}
It takes nearly linear time to compute $B_1,\dots,B_m$ and check 
the condition (S1).
If $\beta_i\ne 1$, then the following assignment:
\begin{itemize}
\item 
$\alpha_1=\dots=\alpha_m=1$,
\item 
$\ovz_j=\ovo$ for $j\ne i$,
\item 
$
\ovz_i
\ \ \equiv_p\ \ 
\tfrac{1}{B_i (\beta_i-1)}
\sum_{j=1}^m B_j \ovc_j
$
\end{itemize}
is a solution that can be computed in polynomial time.
\end{proof}

Next, we claim that the property 
$\exists i\ \beta_i\ne 1$ is strongly generic, 
i.e., a typical equation satisfies this property.
Since the problem involves three parameters, $n,m$, and $p$, we use the following 
stratification for the set of instances of the uniform problem. For $s\in\MN$ 
define a set of pairs
$$
\CI_s = \Set{(E_m,G_{p,n})}{E_m\in \Sph_m(G_{p,n}),\ m,n,p\le s}
$$
equipped with the uniform distribution.
Then the following holds:
\begin{align*}
|G_{p,n}| &= p^n\cdot (p-1),\\
|\Sph_m| &= (p^n\cdot (p-1))^m,\\
|\CI_s| &= \sum_{p,n,m\le s} (p^n\cdot (p-1))^m.
\end{align*}

\begin{lem}\label{le:m-generic}
$\Pr\Set{(E_m,G_{p,n})\in \CI_s}{m\ge s/2} \to 1$ 
exponentially fast as $s\to\infty$.
\end{lem}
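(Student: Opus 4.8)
The plan is to bound the complementary probability $\Pr\{(E_m,G_{p,n})\in\CI_s : m < s/2\}$ and show that it decays exponentially in $s$. The structural fact driving everything is that, for a fixed pair $(p,n)$, the number of equations $|\Sph_m(G_{p,n})| = q^m$ grows geometrically in $m$ with base $q := p^n(p-1)$, and crucially $q\ge 2$ for every admissible pair $(p,n)$ (the minimum $q=2$ being attained at $p=2$, $n=1$). Thus within each ``column'' of fixed $(p,n)$ the equations with large $m$ overwhelmingly outnumber those with small $m$, and I only need this domination to hold \emph{uniformly} across columns.

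Concretely, I would condition on $(p,n)$ and express the conditional probability of $m < s/2$ as the ratio of geometric partial sums $\left(\sum_{1\le m < s/2} q^m\right)\big/\left(\sum_{1\le m\le s} q^m\right)$. The denominator is at least $q^s$ (its last term alone), while the numerator, using $q\ge 2$, is at most $\sum_{m=1}^{\lfloor (s-1)/2\rfloor} q^m \le \tfrac{q}{q-1}\,q^{s/2} \le 2\,q^{s/2}$. Hence the conditional probability is at most $2q^{-s/2}\le 2\cdot 2^{-s/2}$, and this bound holds uniformly over all $(p,n)$ precisely because it only ever uses $q\ge 2$.

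Finally I would remove the conditioning by averaging: writing $\Pr\{m<s/2\}$ as $\sum_{(p,n)} w_{p,n}\cdot\Pr\{m<s/2 \mid (p,n)\}$ with weights $w_{p,n} = N(p,n)/|\CI_s|$ (where $N(p,n)=\sum_{m\le s} q^m$) that sum to $1$, the uniform per-column bound yields $\Pr\{m<s/2\}\le 2\cdot 2^{-s/2}$, so $\Pr\{m\ge s/2\}\ge 1 - 2^{1-s/2}\to 1$ exponentially fast. The one point that needs care — and the only real obstacle — is this uniformity of the base $q\ge 2$: it is exactly what lets the conditional-average framing absorb the polynomially-many pairs $(p,n)$ and values of $m$ without resorting to a lossy union bound, which would otherwise reintroduce polynomial prefactors that one would then have to dominate against the exponential decay.
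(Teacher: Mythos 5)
Your proof is correct and follows essentially the same route as the paper's: for each fixed $(p,n)$ you bound the ratio of geometric partial sums by a quantity of the form $C\cdot q^{-s/2}\le C\cdot 2^{-s/2}$ using only $q=p^n(p-1)\ge 2$, and then observe that this uniform per-column bound transfers to all of $\CI_s$ by averaging (the paper phrases this as ``the bound does not depend on $p$ or $n$, therefore it works on the whole $\CI_s$''). The only differences are cosmetic: the paper gets the cleaner constant $1$ in place of your $2$ by comparing $V_{s/2}/V_s$ directly, and it leaves the averaging step implicit where you spell it out with explicit weights.
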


\begin{proof}
For any fixed $p\ge 2,n\ge 1$ we have
$$
V_s=\sum_{m=0}^s 
(p^n\cdot (p-1))^m 
= \frac{(p^n\cdot (p-1))^{s+1}-1}{p^n\cdot (p-1)-1},
$$
which implies that
$$
\frac{V_{s/2}}{V_s} = 
\frac{(p^n\cdot (p-1))^{s/2+1}-1}{(p^n\cdot (p-1))^{s+1}-1}
\le
\frac{1}{(p^n\cdot (p-1))^{s/2}} 
\le
\frac{1}{2^{s/2}} 
$$
that converges to $0$ exponentially fast. The obtained bound
$\frac{1}{2^{s/2}}$ does not depend on $p$ or $n$.
Therefore, the bound works on the whole $\CI_s$.
\end{proof}

\begin{lem}\label{le:beta-generic}
$\Pr\Set{(E_m,G_{p,n})\in \CI_s}{\exists i\ \beta_i\ne 1} \to 1$ 
exponentially fast as $s\to\infty$.
\end{lem}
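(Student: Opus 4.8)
The plan is to bound the probability of the complementary event, namely that \emph{all} the $\MZ_p^\ast$-components $\beta_i$ of the constants are trivial, and to show that this probability tends to $0$ exponentially. Writing each constant as $c_i=(\ovc_i,\beta_i)$, an instance $(E_m,G_{p,n})\in\CI_s$ with prescribed $p,n,m$ is determined by the tuple $(\ovc_1,\beta_1,\dots,\ovc_m,\beta_m)$, so there are $(p^n(p-1))^m=p^{nm}(p-1)^m$ of them, while those with $\beta_1=\dots=\beta_m=1$ number exactly $p^{nm}$ (each $\ovc_i$ is free in $\MZ_p^n$). Hence
$$
\Pr\Set{(E_m,G_{p,n})\in\CI_s}{\forall i\ \beta_i=1}
=\frac{\sum_{p,n,m\le s} p^{nm}}{\sum_{p,n,m\le s} p^{nm}(p-1)^m},
$$
and it suffices to show this ratio is $O(2^{-s/2})$. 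The one genuinely delicate point is the prime $p=2$: since $\MZ_2^\ast$ is trivial, \emph{every} equation over $G_{2,n}$ satisfies $\forall i\ \beta_i=1$, so the entire $p=2$ stratum lands in the numerator and must be controlled separately.

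First I would reduce to large $m$. Since the bad event is contained in $\{m<s/2\}\cup(\{\forall i\ \beta_i=1\}\cap\{m\ge s/2\})$, Lemma~\ref{le:m-generic} already bounds the first part by $O(2^{-s/2})$. For the second part restricted to $p\ge 3$, the key observation is that $(p-1)^m\ge 2^{m}\ge 2^{s/2}$, so term by term $p^{nm}\le 2^{-s/2}\,p^{nm}(p-1)^m$; summing over the range $p\ge3,\ n\le s,\ s/2\le m\le s$ therefore bounds this contribution by $2^{-s/2}$ times a sub-sum of the denominator, hence by $2^{-s/2}|\CI_s|$, i.e. by probability $2^{-s/2}$.

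The main obstacle is the remaining $p=2$ stratum, where the per-stratum cancellation above fails entirely. Here I would instead bound its probability by comparing it, in the denominator, against the single prime $p=3$:
$$
\Pr\Set{(E_m,G_{p,n})\in\CI_s}{p=2}
\le
\frac{\sum_{n,m\le s} 2^{nm}}{\sum_{n,m\le s} 3^{nm}2^{m}}
\le
\frac{\sum_{n,m\le s} 2^{nm}}{\sum_{n,m\le s} 3^{nm}}.
$$
Estimating the numerator by $s^2\cdot 2^{s^2}$ (there are $O(s^2)$ summands, each at most $2^{s^2}$) and the denominator from below by its largest term $3^{s^2}$ gives the bound $s^2(2/3)^{s^2}$, which decays faster than any exponential in $s$. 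The conceptual point is simply that $3^{nm}$ dominates $2^{nm}$ so overwhelmingly that the whole $p=2$ stratum is dwarfed by the contribution of $p=3$ alone.

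Finally I would combine the three estimates: the total bad probability is at most $O(2^{-s/2})+2^{-s/2}+s^2(2/3)^{s^2}=O(2^{-s/2})$, so $\Pr\{\exists i\ \beta_i\ne1\}\to 1$ exponentially fast. I expect the arithmetic to be entirely routine once the $p=2$ case is isolated; the only care needed is to remember that the uniform distribution on $\CI_s$ weights strata by their sizes, so a bound that is uniform \emph{within} each stratum does not by itself control the anomalous $p=2$ stratum — that must be handled by the cross-stratum comparison above.
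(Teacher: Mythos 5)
Your proof is correct, and its core is the same as the paper's: use Lemma~\ref{le:m-generic} to discard the instances with $m<s/2$, and then kill the complementary event $\{\forall i\ \beta_i=1\}$ by the per-stratum count --- within a stratum $(p,n,m)$ the proportion of equations with all $\beta_i=1$ is $(p-1)^{-m}\le(p-1)^{-s/2}$. Where you genuinely go beyond the paper is the prime $p=2$. The paper's entire proof is the single estimate $\Pr \ge 1-(p-1)^{-s/2}$, which is vacuous when $p=2$: since $\MZ_2^\ast$ is trivial, \emph{every} equation over $G_{2,n}$ has $\beta_1=\dots=\beta_m=1$, so no per-stratum bound of this kind can work there. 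The lemma survives only because the $p=2$ strata occupy a super-exponentially small fraction of $\CI_s$, which is precisely your cross-stratum comparison: $\sum_{n,m\le s}2^{nm}\le s^2\,2^{s^2}$, while already the $p=3$ part of $|\CI_s|$ is at least $3^{s^2}$, giving $\Pr[p=2]\le s^2(2/3)^{s^2}$. So your write-up is not merely a stylistic variant; it supplies a case distinction that the paper's one-line proof silently needs (otherwise the argument would require excluding $p=2$ from $\CI_s$, which the definition of $\CI_s$ does not do). What the paper's version buys is brevity --- one inequality, uniform over strata with $p\ge 3$; what yours buys is completeness, at the cost of the extra (but correct and necessary) observation that the uniform measure on $\CI_s$ weights strata by their sizes, so the anomalous $p=2$ stratum must be, and can be, dominated by a single larger stratum rather than handled term by term.
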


\begin{proof}
By Lemma \ref{le:m-generic}, we may assume that $m\ge s/2$, in which case 
$$
\Pr\Set{(E_m,G_{p,n})\in \CI_s}{\exists i\ \beta_i\ne 1} \ge 1-\frac{1}{(p-1)^{s/2}},
$$ 
which converges to $1$ exponentially fast as $s\to\infty$.
\end{proof}

\begin{cor}
The Diophantine (decision) problem for spherical equations over $\{G_{p,n}\}$
is decidable in strongly generically linear time. 
The corresponding search problem can be solved in
strongly generically polynomial time. 
\end{cor}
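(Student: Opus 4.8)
The plan is to package Proposition~\ref{pr:beta-ne-one} together with the genericity estimate of Lemma~\ref{le:beta-generic} into a single partial (generic-case) algorithm that halts within the claimed time bound on \emph{every} input and returns a correct answer on a strongly generic set of instances. First I would describe the decision procedure. On input $(E_m,G_{p,n})$ with constants $c_i=(\ovc_i,\beta_i)$, scan the second coordinates $\beta_1,\dots,\beta_m$ until an index $i$ with $\beta_i\ne 1$ is found; if one is found, output ``solvable'' and halt, and otherwise output ``?''. By Proposition~\ref{pr:beta-ne-one} this answer is always correct when it is decisive, since the existence of a single $\beta_i\ne 1$ already guarantees a solution. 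Reading each coordinate $\beta_i$ costs $O(\lceil\log_2 p\rceil)$, and the scan inspects at most $m$ of them, so the procedure runs in time linear in the input size; crucially, trusting Proposition~\ref{pr:beta-ne-one} means we need not compute any product of the $\beta_i$ to decide, which is exactly what keeps the bound linear rather than merely nearly linear.

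Next I would match this to the formal notion of strong genericity. By Lemma~\ref{le:beta-generic}, the density within $\CI_s$ of the instances satisfying $\exists i\ \beta_i\ne 1$ tends to $1$ exponentially fast as $s\to\infty$, so the set of inputs on which the algorithm halts with the correct answer is strongly generic. This is precisely the assertion that the Diophantine problem is decidable in strongly generically linear time. I would be careful here that the density bound is uniform across the three-parameter stratification by $n,m,p$: the estimate feeding Lemma~\ref{le:beta-generic} is the bound $2^{-s/2}$ of Lemma~\ref{le:m-generic}, which does not depend on $p$ or $n$, so the vanishing fraction of degenerate instances (those with all $\beta_i=1$, concentrated in the small-$p$ strata) cannot accumulate.

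For the search problem I would reuse the same scan, and upon locating an index $i$ with $\beta_i\ne 1$ compute the explicit solution furnished by Proposition~\ref{pr:beta-ne-one}: set $\alpha_1=\dots=\alpha_m=1$ and $\ovz_j=\ovo$ for $j\ne i$, then solve the single linear condition (S2) of Lemma~\ref{le:main-conditions} for $\ovz_i$ by forming the partial products $B_1,\dots,B_m$, the weighted sum $\sum_{j=1}^m B_j\ovc_j$, and one modular inverse of $B_i(\beta_i-1)$. Each step runs in time $\tilde{O}(mn\lceil\log_2 p\rceil)$, hence polynomially (indeed nearly linearly) in the input size, and the same invocation of Lemma~\ref{le:beta-generic} shows the output is a genuine solution on a strongly generic set.

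I do not expect a substantial obstacle, since the statement is essentially an assembly of the three preceding results; the only points requiring genuine care are the uniformity of the genericity bound over the stratification (already secured by Lemma~\ref{le:m-generic}) and the observation that declaring solvability reduces to the linear-time scan rather than to any computation on the coefficient vectors $\ovc_i$ (secured by Proposition~\ref{pr:beta-ne-one}). Both are in place, so the corollary follows by combining the decision and search procedures above with the density estimate.
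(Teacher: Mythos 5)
Your overall plan---packaging Proposition~\ref{pr:beta-ne-one} and Lemma~\ref{le:beta-generic} into a partial algorithm that is correct on the strongly generic set $\{\exists i\ \beta_i\ne 1\}$---is structurally the paper's route, but your decision procedure has a genuine correctness gap: you output ``solvable'' as soon as the scan finds one $\beta_i\ne 1$, and you explicitly decline to compute the product $\beta_1\cdots\beta_m$, calling this the crucial optimization. By Lemma~\ref{le:main-conditions}, condition (S1), namely $\beta_1\cdots\beta_m=1$, is \emph{necessary} for solvability, and it depends only on the coefficients, not on the unknowns; the literal wording of Proposition~\ref{pr:beta-ne-one} is loose on this point, but its own proof begins by computing $B_1,\dots,B_m$ and checking (S1), and Lemma~\ref{le:main-conditions} leaves no room for doubt. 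Concretely, take $m=1$ and $c_1=(\ovo,2)$ with $p\ge 3$: your scan finds $\beta_1=2\ne 1$ and declares the equation solvable, yet $z_1^{-1}c_1z_1=1$ has no solution, since conjugation preserves the $\MZ_p^\ast$-coordinate. Moreover this failure is not a corner case but the typical one: for uniformly random coefficients the product $\beta_1\cdots\beta_m$ is uniform in $\MZ_p^\ast$, so (S1) holds with probability only $1/(p-1)$, and hence a strongly generic instance in the very set you certify is actually \emph{unsolvable}. Your algorithm therefore answers incorrectly on a strongly generic set, which is the opposite of what the corollary asserts. The step you removed to get ``linear rather than merely nearly linear'' time is exactly the step that correctness requires: the intended algorithm computes $\beta_1\cdots\beta_m$ (nearly linear time), answers ``no'' if the product differs from $1$, answers ``yes'' if the product equals $1$ and some $\beta_i\ne 1$ (by the explicit assignment), and answers ``?'' only in the negligible case where all $\beta_i=1$.

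The same omission breaks your search claim. The assignment in Proposition~\ref{pr:beta-ne-one} only enforces condition (S2); it is a genuine solution precisely when (S1) also holds, so your assertion that ``the same invocation of Lemma~\ref{le:beta-generic} shows the output is a genuine solution on a strongly generic set'' is false---generically no solution exists at all. The repaired search procedure first performs the (S1) test, reports unsolvability when it fails, and only in the (S1)-positive case with some $\beta_i\ne 1$ outputs the assignment $\alpha_1=\dots=\alpha_m=1$, $\ovz_j=\ovo$ for $j\ne i$, and $\ovz_i$ obtained by solving (S2) for the single unknown $\ovz_i$. With the (S1) check restored in both procedures, your argument coincides with the paper's proof.
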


\begin{proof}
Follows from Proposition \ref{pr:beta-ne-one}
and Lemma \ref{le:beta-generic}
\end{proof}

Since spherical equations in which at least one coefficient 
$c_i=(\ovc_i,\beta_i)$ satisfies $\beta_i\ne 1$
are computationally easy,
we put a restriction on the coefficients that we use in spherical equations. 
Define the set
\begin{equation}\label{eq:C-p-n}
C_{p,n}=\Set{(\ovc,1)}{\ovc\in \MZ_p^n}.
\end{equation}
From now on we assume that all $c_i\in C_{p,n}$.
In that case condition (S1) of Lemma \ref{le:main-conditions}
is trivially satisfied and condition (S2) translates into
the following:
\begin{equation}\label{eq:nonzero-combination-problem}
\exists \alpha_i\in\MZ_p^\ast
\ \ \mbox{ s.t. }\ 
\sum_{i=1}^m \alpha_i^{-1} \ovy_i \equiv_p \ovo.
\end{equation}
The obtained condition defines a homogeneous system of linear
congruences that does not allow zero values for unknowns $\alpha_i$,
which makes the problem nontrivial (we conjecture it is $\NP$-hard).

\subsection{Spherical equations over $G_{p,n}$: worst case}
\label{se:sph-np-hard}

Here we investigate the worst case complexity for spherical equations.
For $\ovw\in\MZ_3^n$ define 
$$
c_\ovw = (\ovw,1)\in \MZ_{3}^n\rtimes \MZ_3^\ast.
$$
Consider an instance $I$ of $\ovv_1,\dots,\ovv_m,\ovv \in \MZ_3^n$ of $\SSP$.
Recall that $I$ is a positive instance if 
$\varepsilon_1\ovv_1+\dots+\varepsilon_m\ovv_m=\ovv$ for some 
$\varepsilon_1,\dots,\varepsilon_m\in\{0,1\}$.
For $I$ define $\ovv'=\ovv+\sum \ovv_i$ and
the spherical equation $E_I$ as
\begin{equation}\label{eq:E_I}
\prod_i^m z_i^{-1}c_{\ovv_i} z_i = c_{\ovv'}.
\end{equation}

\begin{prop}\label{pr:equations-worst-hard}
$I$ is a positive instance of $\SSP$
$\ \ \Leftrightarrow\ \ $
$E_I$ has a solution.
\end{prop}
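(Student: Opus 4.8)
The plan is to collapse $E_I$ into a single linear congruence over $\MZ_3^n$ and then identify that congruence with the subset sum condition by exploiting the special arithmetic of $\MZ_3^\ast$. Writing each unknown as $z_i = (\ovz_i, \alpha_i)$ with $\alpha_i \in \MZ_3^\ast$, I would first apply the conjugation formula to every factor. Because each constant $c_{\ovv_i} = (\ovv_i, 1)$ has second coordinate $1$, the identity $(\ovx,\alpha)^{-1}(\ovy,\beta)(\ovx,\alpha) = (\alpha^{-1}((\beta-1)\ovx + \ovy), \beta)$ specializes at $\beta = 1$ to $z_i^{-1} c_{\ovv_i} z_i = (\alpha_i^{-1}\ovv_i, 1)$; note that the vector part $\ovz_i$ of the unknown drops out completely, so only the units $\alpha_i$ matter.

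Next I would multiply these factors. Since they all have second coordinate $1$, their product is simply $(\sum_{i=1}^m \alpha_i^{-1}\ovv_i, 1)$, and comparing with $c_{\ovv'} = (\ovv', 1)$ shows that $E_I$ holds if and only if
$$
\sum_{i=1}^m \alpha_i^{-1}\ovv_i \equiv_3 \ovv'.
$$
Hence $E_I$ is solvable precisely when this congruence admits a choice of units $\alpha_i \in \MZ_3^\ast$; this is the inhomogeneous analogue of condition (S2) of Lemma~\ref{le:main-conditions} and of equation~\eqref{eq:nonzero-combination-problem}.

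The decisive step uses $p = 3$. Here $\MZ_3^\ast = \{1, 2\}$ with $1^{-1} = 1$ and $2^{-1} = 2$, so each admissible $\alpha_i^{-1}$ ranges over $\{1, 2\} = \{1, -1\}$ modulo $3$. I would reparametrize by $\alpha_i^{-1} = 1 - 2\varepsilon_i$, a bijection between $\{1,2\}$ and $\{0,1\}$. Substituting and using $\ovv' = \ovv + \sum_i \ovv_i$, the term $\sum_i \ovv_i$ cancels and the congruence becomes $-2\sum_i \varepsilon_i \ovv_i \equiv_3 \ovv$; since $-2 \equiv 1 \pmod 3$, this is exactly $\sum_i \varepsilon_i \ovv_i \equiv_3 \ovv$, i.e.\ $I$ is a positive instance of $\SSP$. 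Reading the equivalences in both directions gives the stated biconditional.

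The arithmetic is routine, so the real content lies in the matching itself: recognizing that the two units of $\MZ_3^\ast$ encode exactly the two binary subset sum choices, and that the offset $\ovv'$ has been calibrated so that a $\{1,-1\}$-combination turns into a genuine $\{0,1\}$-combination. The one identity I would verify with care is $-2 \equiv 1 \pmod 3$, which is precisely what lets the shift by $\sum_i \ovv_i$ cancel and what ties the construction to $p = 3$ specifically; for a general prime this particular reduction would not go through unchanged.
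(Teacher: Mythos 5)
Your proposal is correct and follows essentially the same route as the paper: both reduce $E_I$ to the congruence $\sum_{i=1}^m \alpha_i^{-1}\ovv_i \equiv_3 \ovv'$ (you by direct computation with the conjugation formula, the paper by invoking Lemma~\ref{le:main-conditions}) and then identify the units $\{1,2\}=\MZ_3^\ast$ with the subset-sum choices $\{0,1\}$ via a shift by $1$, using the calibration $\ovv'=\ovv+\sum_i\ovv_i$. Your parametrization $\alpha_i^{-1}=1-2\varepsilon_i$ is, modulo $3$, exactly the paper's $\beta_i=\varepsilon_i+1$ (combined with the fact that inversion is the identity on $\MZ_3^\ast$), so the two arguments coincide.
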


\begin{proof}
By design we have
\begin{align*}
I \mbox{ is a positive instance of } \SSP
&\ \Leftrightarrow\ 
\ovv=\sum_{i=1}^m \varepsilon_i\ovv_i
\mbox{ for } \varepsilon_1,\dots,\varepsilon_m\in\{0,1\}\\
&\ \Leftrightarrow\ 
\ovv+\sum_{i=1}^m \ovv_i=\sum_{i=1}^m \beta_i\ovv_i
\mbox{ for } \beta_1,\dots,\beta_m\in\{1,2\}=\MZ_3^\ast\\
&\ \Leftrightarrow\ 
\ovv+\sum_{i=1}^m \ovv_i=\sum_{i=1}^m \alpha_i^{-1} \ovv_i
\mbox{ for } \alpha_1,\dots,\alpha_m\in\{1,2\}=\MZ_3^\ast\\
&\ \stackrel{\ref{le:main-conditions}}{\Leftrightarrow}\ 
E_I
\mbox{ has a solution.}
\end{align*}
\end{proof}

\begin{cor}\label{co:equations-worst-hard}
The Diophantine problem 
for spherical equations over $\MZ_{3,n}\rtimes \MZ_3^\ast$
is $\NP$-complete.
\end{cor}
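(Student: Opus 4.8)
The plan is to establish the two halves of $\NP$-completeness separately: membership in $\NP$ and $\NP$-hardness. The hardness direction is essentially handed to us by Proposition \ref{pr:equations-worst-hard}, so the work divides into verifying that the reduction there runs in polynomial time and confirming that the problem lies in $\NP$ at all.

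For membership in $\NP$, I would use a solution $(z_1,\dots,z_m)$ itself as the certificate. The key observation is that the prime $p=3$ is held fixed, so each $z_i=(\ovz_i,\alpha_i)$ has $\ovz_i\in\MZ_3^n$ and $\alpha_i\in\MZ_3^\ast$; thus every candidate solution has size $O(mn)$, which is polynomial in the length of the equation. Given such a certificate, Lemma \ref{le:main-conditions} reduces verification to checking conditions (S1) and (S2), both of which amount to a handful of modular sums and products over $\MZ_3$ and can be evaluated in polynomial time. This shows the Diophantine problem is in $\NP$.

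For $\NP$-hardness, I would invoke Proposition \ref{pr:SSP_Z3_omega}, which (taking the modulus to be $3$) states that $\SSP$ is $\NP$-complete for the class $\{\MZ_3^n\}_{n=1}^\infty$, and then exhibit $I\mapsto E_I$ as a polynomial-time many-one reduction. The correctness of this reduction is exactly the content of Proposition \ref{pr:equations-worst-hard}: $I$ is a positive $\SSP$ instance if and only if $E_I$ has a solution. It remains only to check that $E_I$ is computable from $I$ in polynomial time, which is clear, since forming $\ovv'=\ovv+\sum_i\ovv_i$ is a single sum of $m$ vectors in $\MZ_3^n$ and writing down the equation \eqref{eq:E_I} costs $O(mn)$.

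The two paragraphs together give the corollary. I do not expect a serious obstacle here: the only point requiring care is the $\NP$-membership argument, and it is clean precisely because $p=3$ is held constant. Over a growing modulus one would instead have to argue separately that short solutions always suffice, but that complication does not arise in the present fixed-prime setting.
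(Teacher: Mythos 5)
Your proof is correct and takes essentially the same route as the paper: the paper's proof likewise combines Proposition \ref{pr:equations-worst-hard} (the reduction $I\mapsto E_I$) with the $\NP$-completeness of $\SSP$ over $\{\MZ_3^n\}_{n=1}^\infty$ from Proposition \ref{pr:SSP_Z3_omega}. The only difference is that you spell out the $\NP$-membership certificate argument (solution as witness, verified via Lemma \ref{le:main-conditions}), which the paper leaves implicit.
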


\begin{proof}
By Proposition \ref{pr:equations-worst-hard}, $I\to E_I$
is a many-one polynomial-time reduction of $\SSP(\MZ_{3}^n)$
to spherical equations over $\SSP(\MZ_{3,n}\rtimes\MZ_3^\ast)$.
By Proposition \ref{pr:SSP_Z3_omega}, $\SSP$ is $\NP$-complete
for groups $\{\MZ_3^n\}_{n=1}^\infty$. 
Hence the result.
\end{proof}

\section{Constrained spherical equations: average-case hardness}
\label{se:cise}

In this section we discuss spherical equations over the 
class of groups $\CG=\{G_{p,n}\}$
with constraints on values of $z_i$'s. 
In full generality the problem can be formulated as follows.
Given a group $G\in\CG$, a spherical equation
$\prod_{i=1}^m z_i^{-1} c_i z_i = 1$
over $G$, and subsets $Z_1,\dots,Z_m \subseteq G$,
find $z_1,\dots,z_m\in G$ satisfying
$$
\left\{
\begin{array}{l}
\prod_{i=1}^m z_i^{-1} c_i z_i= 1\\
z_i\in Z_i
\end{array}
\right.
$$
or a similar \emph{inhomogeneous} form
$$
\left\{
\begin{array}{l}
\prod_{i=1}^m z_i^{-1} c_i z_i= c\\
z_i\in Z_i.
\end{array}
\right.
$$
The problem to decide if a given constrained inhomogeneous spherical equation
has a solution is abbreviated $\CISE$.

It is not hard to prove that $\CISE$ for groups $G_{p,n}$ 
is hard in the worst case.
In fact, for $p=3$, the proof of Proposition \ref{pr:equations-worst-hard}
works as is, with constraints $Z_i=\{(\ovo,1),(\ovo,2)\}$
(so chosen sets $Z_i$ effectively define no constraints).
Using these constraints we can easily achieve the same result 
for any odd value of $p$.

\begin{thm}
$\CISE$ is $\NP$-hard for groups $\{G_{p,n}\}_n$ for any fixed odd prime $p$.
\end{thm}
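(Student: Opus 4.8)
The plan is to generalize the worst-case reduction of Proposition~\ref{pr:equations-worst-hard} from $p=3$ to an arbitrary fixed odd prime $p$, reducing from the subset sum problem $\SSP(\MZ_p^n)$, which is $\NP$-complete for the class $\{\MZ_p^n\}_n$ by Proposition~\ref{pr:SSP_Z3_omega} (take $m=p\ge 3$). The device that replaces the ``no real constraint'' feature of the $p=3$ case is the constraint set $Z_i=\{(\ovo,1),(\ovo,2)\}$: it forces each $z_i=(\ovz_i,\alpha_i)$ to have $\ovz_i=\ovo$ and $\alpha_i\in\{1,2\}$, so that the binary subset-sum choice $\varepsilon_i\in\{0,1\}$ can be carried by $\alpha_i$.

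Given an $\SSP(\MZ_p^n)$ instance $\ovv_1,\dots,\ovv_m,\ovv$, I would use constants $c_i=(\ovv_i,1)\in C_{p,n}$ and set up the inhomogeneous constrained equation $\prod_{i=1}^m z_i^{-1}c_i z_i=(\ovu,1)$ with $z_i\in Z_i$, for a target $\ovu$ to be fixed below. Since all $\beta_i=1$ and $\ovz_i=\ovo$, a direct application of the conjugation formula (equivalently Lemma~\ref{le:main-conditions}) collapses the left-hand side to $\big(\sum_{i=1}^m\alpha_i^{-1}\ovv_i,\,1\big)$, so the equation has a solution iff $\sum_{i=1}^m\alpha_i^{-1}\ovv_i\equiv_p\ovu$ for some choice of $\alpha_i\in\{1,2\}$.

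The key step is the linear encoding of $\{0,1\}$ by the inverses of $\{1,2\}$. As $\alpha_i$ ranges over $\{1,2\}$ its inverse ranges over $\{1,(p+1)/2\}$, so writing $\varepsilon_i=0$ for $\alpha_i=1$ and $\varepsilon_i=1$ for $\alpha_i=2$ gives the affine relation $\alpha_i^{-1}=1+\varepsilon_i\tfrac{p-1}{2}$. Hence $\sum_i\alpha_i^{-1}\ovv_i=\sum_i\ovv_i+\tfrac{p-1}{2}\sum_i\varepsilon_i\ovv_i$. Choosing $\ovu=\sum_i\ovv_i+\tfrac{p-1}{2}\ovv$, the equation becomes $\tfrac{p-1}{2}\sum_i\varepsilon_i\ovv_i\equiv_p\tfrac{p-1}{2}\ovv$; since $\tfrac{p-1}{2}\equiv-2^{-1}$ is a unit modulo the odd prime $p$, this is equivalent to $\sum_i\varepsilon_i\ovv_i\equiv_p\ovv$, i.e.\ to the original instance being positive. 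The map is clearly polynomial-time, giving the desired reduction and hence $\NP$-hardness.

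The main obstacle, and the only place the argument departs from $p=3$, is precisely this inversion step: for $p>3$ the set $\{1,2\}$ is no longer closed under inversion in $\MZ_p^\ast$, so one cannot simply declare that the constraints impose nothing and must instead track the map $\alpha_i\mapsto\alpha_i^{-1}$ explicitly. The verification reduces to checking that $\tfrac{p-1}{2}$ is invertible modulo $p$, which holds for every odd prime; and as a sanity check the construction specializes at $p=3$ (where $\tfrac{p-1}{2}=1$ and $\ovu=\ovv+\sum_i\ovv_i$) to exactly the equation $E_I$ of Proposition~\ref{pr:equations-worst-hard}.
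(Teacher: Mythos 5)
Your proof is correct and takes essentially the same approach as the paper: a many-one reduction from $\SSP(\MZ_p^n)$ (which is $\NP$-complete by Proposition~\ref{pr:SSP_Z3_omega}) to constrained inhomogeneous spherical equations whose constraints restrict the unit component of each $z_i$ to two values, exactly mirroring Proposition~\ref{pr:equations-worst-hard}. The only difference is cosmetic: the paper constrains $z_i\in\{(\ovo,1),(\ovo,2^{-1})\}$, so that the coefficients $\alpha_i^{-1}$ range over $\{1,2\}$ and the target $\ovv'=\ovv+\sum_i\ovv_i$ carries over verbatim from the $p=3$ case, whereas you constrain $\alpha_i\in\{1,2\}$ and compensate for the resulting inverses $\{1,2^{-1}\}$ by rescaling the target to $\sum_i\ovv_i+\tfrac{p-1}{2}\ovv$, which is equally valid.
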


\begin{proof}
For a given instance $\ovv_1,\dots,\ovv_m,\ovv \in \MZ_p^n$, call it $I$,
of $\SSP(\MZ_p^n)$, as in the proof of Proposition \ref{pr:equations-worst-hard},
construct a constrained inhomogeneous spherical equation $E_I$ 
over $\MZ_p^n\rtimes\MZ_p^\ast$
$$
\left\{
\begin{array}{l}
\prod_{i=1}^m z_i^{-1}c_{\ovv_i} z_i = c_{\ovv'}\\
z_i = (\ovo,1) \mbox{ or } (\ovo,2^{-1})
\end{array}
\right.
$$
and observe that $I$ is a positive instance of $\SSP(\MZ_p^n)$
if and only if $E_I$ has a solution.
This gives a polynomial time reduction from 
$\SSP$ over groups $\{\MZ_p^n\}_{n=1}^\infty$
to decidability of constrained inhomogeneous spherical equations 
over $\MZ_p^n\rtimes\MZ_p^\ast$.
\end{proof}

Our next goal is to demonstrate the average-case hardness 
of constrained spherical equations. 
To achieve this, we randomize equations as follows.
First, we stratify parameters:
\begin{itemize}
\item 
the parameter $n\in\MN$ is the main independent parameter;
\item 
parameters $m$ and $p$ are functions of $n$ satisfying conditions
\eqref{eq:mnp-conditions}.
\end{itemize}
Then, for an arbitrary but fixed $n\in\MN$, consider 
the set of coefficients $C_{p,n}$
defined in \eqref{eq:C-p-n}
and a (finite) set of constrained inhomogeneous spherical equations 
\begin{equation}\label{eq:CSE-12}
\left\{
\begin{array}{l}
\prod_{i=1}^m z_i^{-1} c_i z_i= c\ \  \mbox{ with } c_i,c\in C_{p,n}\\
z_i=(\ovo,1)\mbox{ or }(\ovo,2^{-1}).
\end{array}
\right.
\end{equation}
The problem to find a solution of \eqref{eq:CSE-12} is called $\CISE_{\{1,2\}}$. 
Also, we want to study slightly relaxed equations 
\begin{equation}\label{eq:CSE-123}
\left\{
\begin{array}{l}
\prod_{i=1}^m z_i^{-1} c_i z_i= c\ \  \mbox{ with } c_i,c\in C_{p,n}\\
z_i=(\ovo,1),(\ovo,2^{-1}),\mbox{ or }(\ovo,3^{-1}).
\end{array}
\right.
\end{equation}
The problem to find a solution of \eqref{eq:CSE-123} is called 
$\CISE_{\{1,2,3\}}$.

\medskip\noindent
\textbf{Randomized constrained inhomogeneous spherical equation problem}.
Find a solution for a uniformly distributed system \eqref{eq:CSE-12}
(or \eqref{eq:CSE-123}).

\medskip
\begin{thm}\label{th:CISE-hard}
Suppose that a PPT algorithm $\CA$ solves
the randomized $\CISE_{\{1,2\}}$ problem 
(or $\CISE_{\{1,2,3\}}$ problem)
with parameters $n,m,p$ satisfying \eqref{eq:mnp-conditions}
with probability at least $n^{-c_0}$ for some fixed constant $c_0>0$,
where the probability is taken over the choice of 
the instance as well as the coin-tosses of $\CA$.
Then there is a PPT algorithm
that solves $\SIVP_{\gamma=pn^6}$ for any lattice
with overwhelming probability (e.g. $1-2^{-n}$).
\end{thm}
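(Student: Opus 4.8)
The plan is to reduce the randomized $\ISIS$ problem to the randomized $\CISE$ problem and then invoke the corollary to Theorem~\ref{th:Ajtai}. The starting observation is that, under the stated restrictions, a $\CISE$ instance is merely a disguised $\ISIS$ instance. Indeed, since every coefficient lies in $C_{p,n}$ we have $\beta_i=1$, and the constraints in \eqref{eq:CSE-12} force $\ovz_i=\ovo$; hence the only unknowns are the units $\alpha_i$. A direct computation (equivalently, condition~(S2) of Lemma~\ref{le:main-conditions} applied to the equivalent homogeneous form) collapses the equation to the single linear congruence
\begin{equation*}
\sum_{i=1}^m \alpha_i^{-1}\,\ovc_i \equiv_p \ovc,
\qquad \alpha_i^{-1}\in\{1,2\}.
\end{equation*}

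Next I would perform an affine change of variables to expose the $\ISIS$ structure. For $\CISE_{\{1,2\}}$, write $x_i=\alpha_i^{-1}-1\in\{0,1\}$; then the congruence becomes $\sum_i x_i\,\ovc_i \equiv_p \ovc-\sum_i\ovc_i$, which is precisely an instance of $\ISIS_{\{0,1\}}$ with coefficient matrix $A=(\ovc_1\mid\dots\mid\ovc_m)$ and target vector $\ovy=\ovc-\sum_i\ovc_i$. For $\CISE_{\{1,2,3\}}$ one instead sets $x_i=\alpha_i^{-1}-2\in\{-1,0,1\}$, producing an instance of $\ISIS_{\{-1,0,1\}}$ with the same matrix $A$ and target $\ovy=\ovc-2\sum_i\ovc_i$. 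In both cases the translation is efficiently invertible, so a solution of the $\CISE$ instance yields a solution of the associated $\ISIS$ instance and conversely.

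The key point for an average-case reduction is that this correspondence preserves the uniform distribution. Given a uniformly random $\ISIS$ instance $(A,\ovy)$, I would build a $\CISE$ instance by taking the columns of $A$ as the coefficient vectors $\ovc_i$ and setting $c=(\ovc,1)$ with $\ovc=\ovy+\sum_i\ovc_i$ (resp.\ $\ovc=\ovy+2\sum_i\ovc_i$). Since $\ovy$ is uniform and independent of $A$, the resulting $\ovc$ is uniform, so the constructed system \eqref{eq:CSE-12} (resp.\ \eqref{eq:CSE-123}) is distributed exactly as the uniform distribution on $\CISE$ instances, with parameters $n,m,p$ inherited from $(A,\ovy)$ and hence still satisfying \eqref{eq:mnp-conditions}. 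Running $\CA$ on it succeeds with probability at least $n^{-c_0}$ over instance and coins, and translating the returned $\alpha_i$ back through $x_i=\alpha_i^{-1}-1$ (resp.\ $-2$) solves the original $\ISIS$ instance with the same probability. Thus $\CA$ yields a PPT solver for randomized $\ISIS_{\{0,1\}}$ (resp.\ $\ISIS_{\{-1,0,1\}}$) with non-negligible success probability, and the corollary to Theorem~\ref{th:Ajtai} then upgrades this into a PPT algorithm solving $\SIVP_{\gamma=pn^6}$ on every lattice with overwhelming probability.

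I expect two points to require care rather than being automatic. First, the exact matching of distributions: one must verify that, as $A$ ranges uniformly and $\ovy$ ranges uniformly and independently, the induced triple $(\ovc_1,\dots,\ovc_m,\ovc)$ is genuinely uniform over the instance space so that no bias is introduced into the success probability. Second, the corollary as stated covers only $\ISIS_{\{0,1\}}$; for $\CISE_{\{1,2,3\}}$ one needs the analogous statement for $\ISIS_{\{-1,0,1\}}$, obtainable by the same column-deletion argument but additionally guessing the sign $x_i\in\{-1,1\}$ of a fixed nonzero coordinate before calling the inhomogeneous solver. This sign-guessing step is the only genuinely new ingredient beyond the reductions already established.
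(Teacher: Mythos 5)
Your proposal is correct, and for the $\CISE_{\{1,2\}}$ case it is essentially the paper's own argument: the same change of variables $\alpha_i^{-1}=x_i+1$, the same uniformity-preserving bijection between instances $(A,\ovy)$ of $\ISIS_{\{0,1\}}$ and systems \eqref{eq:CSE-12} with coefficients the columns of $A$ and target $\ovc=\ovy+\sum_i\ovc_i$, followed by an appeal to the corollary of Theorem~\ref{th:Ajtai}. Where you genuinely diverge is the $\CISE_{\{1,2,3\}}$ case. The paper disposes of it in one sentence, asserting a ``similar one-to-one correspondence'' between $\CISE_{\{1,2,3\}}$ and the \emph{homogeneous} problem $\SIS_{\{-1,0,1\}}$; you instead route through the \emph{inhomogeneous} $\ISIS_{\{-1,0,1\}}$ and then extend the corollary by guessing the sign of a nonzero coordinate in addition to deleting a column. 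Your route is the more defensible one: a uniformity-preserving bijection with $\SIS_{\{-1,0,1\}}$ cannot exist as literally stated, since an $\SIS$ instance is just the matrix $A$ (instance space of size $p^{nm}$, versus $p^{n(m+1)}$ for systems \eqref{eq:CSE-123}); in the homogeneous reduction the target $\ovc=2\sum_i\ovc_i$ is determined by the coefficients, so the produced instances form a negligible, non-uniform subfamily on which an average-case solver carries no guarantee; and the all-$(\ovo,2^{-1})$ assignment is always a solution of that constructed system, so the solver may return it and yield only the forbidden trivial $\SIS$ solution $\ovx=\ovo$. Your $\ISIS_{\{-1,0,1\}}$ detour avoids all three issues at the modest cost of proving the sign-guessing analogue of the corollary, which works exactly as you describe. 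In short: same skeleton as the paper, but your treatment of the $\{1,2,3\}$ variant is more careful than, and effectively repairs, the paper's sketch.
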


\begin{proof}
Consider an arbitrary instance of $(A,\ovy)$ of $\ISIS_{\{0,1\}}$.
Let $\ovv_1,\dots,\ovv_m$ be the system of columns of $A$. 
Consider the equation \eqref{eq:E_I},
where, as in Section \ref{se:sph-np-hard}, 
$c_\ovv=(\ovv,1)$ and $\ovv'=\ovy + \sum_{i=1}^m \ovv_i$.
Then for $\ovx=(x_1,\dots,x_m)\in\{0,1\}^m$ we have
\begin{align*}
\ovx
\mbox{ is a solution for $(A,\ovy)$}
&\ \ \Leftrightarrow\ \ 
\sum_{i=1}^m x_i \ovv_i =  \ovy\\
&\ \ \Leftrightarrow\ \ 
\sum_{i=1}^m (x_i+1)\ovv_i =  \ovy + \sum_{i=1}^m \ovv_i\\
&\ \ \Leftrightarrow\ \ 
\prod_{i=1}^m (\ovo,(x_i+1)^{-1})^{-1} c_{\ovv_i} (\ovo,(x_i+1)^{-1}) 
= c_{\ovv'}\\
&\ \ \Leftrightarrow\ \ 
\{z_i=(\ovo,(x_i+1)^{-1})\}_{i=1}^m
\mbox{ is a solution for \eqref{eq:CSE-12}.}
\end{align*}
This establishes a one-to-one correspondence between 
$\ISIS_{\{0,1\}}$ and $\CISE_{\{1,2\}}$ that
translates the uniform distribution for the instances of $\ISIS_{\{0,1\}}$
to the uniform distribution for the instances of $\CISE_{\{1,2\}}$
(the element $\ovv'\in C_{p,n}$ is uniformly distributed because $\ovy$
uniformly distributed).
A similar reduction can be used to establish
a one-to-one correspondence between 
$\SIS_{\{-1,0,1\}}$ and $\CISE_{\{1,2,3\}}$. 
\end{proof}

\begin{remark}
The one-to-one correspondence established in the proof of Theorem \ref{th:CISE-hard}
also implies that uniformly chosen instances of 
$\CISE_{\{1,2\}}$ and $\CISE_{\{1,2,3\}}$
have solutions with probability approaching $1$ as $n\to\infty$.
\end{remark}

\begin{remark}
Similar results can be proven for the class of symmetric groups 
$\{S_n\}_{n=1}^\infty$,
or any other class of groups that contain the groups $G_{p,n}$ in a ``compact'' way.
\end{remark}

\section{Spherical functions}

Let $G$ be a group.
For $\ovc=(c_1,\dots,c_m)\in G^m$ define a function $f_\ovc\colon G^m\to G$ by
$$
(z_1,\dots,z_m) 
\ \ \stackrel{f_\ovc}{\mapsto}\ \ 
(z_1^{-1} c_1 z_1)
\cdots
(z_m^{-1} c_m z_m).
$$
We call $f_\ovc$ a \emph{spherical function} because the problem of finding
an $f_\ovc$-preimage of $g\in G$ is equivalent to finding a solution 
for a spherical equation
$(z_1^{-1} c_1 z_1)
\cdots
(z_m^{-1} c_m z_m)=g
$.
Fix distinct $g_0,g_1\in G$ and define a function $H_\ovc\colon \{0,1\}^m\to G$ by
$$
(b_1,\dots,b_m)
\ \ \stackrel{H_\ovc}{\mapsto}\ \ 
(g_{b_1}^{-1} c_1 g_{b_1})
\cdots
(g_{b_m}^{-1} c_m g_{b_m}).
$$ 
called a \emph{$0/1$-spherical function} of length $m$.
Similarly, we can define $-1/0/1$-spherical functions of length $m$.

Consider the family of groups $\{G_{p,n}\}$ and
$g_0=(\ovo,1), g_1=(\ovo,2^{-1})\in G_{p,n}$, where $2^{-1}=\tfrac{1}{2}(p+1)$ is 
the multiplicative inverse of $2$ modulo an odd prime $p$.
As in Section \ref{se:cise}, $n$ is considered to be the main parameter;
the parameters $m$ and $p$ are functions of $n$ satisfying conditions 
\eqref{eq:mnp-conditions}.
Define a system of functions
$$
\CH_n=\Set{H_\ovc}{c_1,\dots,c_m\in C_{p,n}}
\mbox{ and }
\CH= \bigcup_{n=1}^\infty \CH_n.
$$

\begin{prop}\label{pr:one-way}
If $\SIVP_{\gamma=pn^6}$ is hard in the worst case, then
$\CH$ is a one-way function family.
\end{prop}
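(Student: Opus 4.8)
The plan is to verify the two defining properties of a one-way function family — efficient forward evaluation and hardness of inversion — and to reduce the inversion problem to the randomized $\CISE_{\{1,2\}}$ problem, so that Theorem \ref{th:CISE-hard} supplies the hardness directly. Forward evaluation is the routine direction: given an index $\ovc = (c_1,\dots,c_m)\in C_{p,n}^m$ and an input $(b_1,\dots,b_m)\in\{0,1\}^m$, each conjugate $g_{b_i}^{-1} c_i g_{b_i}$ is computed from the conjugation formula for $G_{p,n}$, and the product of the $m$ factors costs $m$ group multiplications, which is polynomial in $n$ since $m,p$ are polynomially bounded in $n$ by \eqref{eq:mnp-conditions}. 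The index sampler draws each $c_i$ uniformly from $C_{p,n}$, and the input sampler draws the bit string uniformly from $\{0,1\}^m$.

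For hardness of inversion, I would first observe that inverting $H_\ovc$ on a target $y\in G_{p,n}$ is literally the problem $\CISE_{\{1,2\}}$: a preimage is a choice of $z_i=g_{b_i}\in\{(\ovo,1),(\ovo,2^{-1})\}$ with $\prod_i z_i^{-1}c_i z_i = y$. A direct computation (cf. Lemma \ref{le:main-conditions} and the proof of Theorem \ref{th:CISE-hard}), using $\beta_i=1$ since $c_i\in C_{p,n}$ and $\alpha_i^{-1}=b_i+1\in\{1,2\}$, shows that $H_\ovc(b)=c_{\ovy}$ with $\ovy = \sum_{i=1}^m (b_i+1)\ovc_i = \big(\sum_{i=1}^m \ovc_i\big) + A\,b$, where $A$ is the matrix whose columns are the $\ovc_i$. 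Thus any inverter for $\CH$ on the pair $(\ovc, H_\ovc(b))$ produces a solution of the constrained spherical equation with target $c_{\ovy}$, i.e. solves $\CISE_{\{1,2\}}$ on that instance; a solution always exists because $y$ lies in the image of $H_\ovc$ by construction.

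The hard part will be matching distributions. In the inversion game the target $c_{\ovy}$ is the image of a uniform bit string under $H_\ovc$, whereas the randomized $\CISE_{\{1,2\}}$ problem of Theorem \ref{th:CISE-hard} samples its target uniformly from $C_{p,n}$. Since $\ovy=\sum_i\ovc_i + A\,b$ is a fixed shift of $A\,b$, it suffices to argue that $A\,b \bmod p$ is statistically close to uniform on $\MZ_p^n$ when $A$ is uniform and $b\in\{0,1\}^m$ is uniform. This is exactly a regularity (leftover hash) statement, and I expect it to hold under the hypothesis $n\log(p)<m$ from \eqref{eq:mnp-conditions}, which guarantees enough entropy in $b$ to smooth $A\,b$ to near-uniform. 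It follows that the instance distribution $(\ovc, H_\ovc(b))$ fed to the inverter is statistically close to the uniform $\CISE_{\{1,2\}}$ distribution.

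Putting these together, suppose $\CH$ fails to be one-way, so some PPT algorithm inverts $H_\ovc$ with probability at least $n^{-c_0}$. By the distribution-matching step this algorithm solves the randomized $\CISE_{\{1,2\}}$ problem with probability at least $n^{-c_0}$ minus a negligible loss, hence at least $n^{-c_0'}$ for large $n$. Theorem \ref{th:CISE-hard} then yields a PPT algorithm solving $\SIVP_{\gamma=pn^6}$ on every lattice with overwhelming probability, contradicting the worst-case hardness of $\SIVP_{\gamma=pn^6}$; therefore $\CH$ is one-way. The same argument, using the $\{1,2,3\}$-valued constraints and the $\CISE_{\{1,2,3\}}$ / $\SIS_{\{-1,0,1\}}$ branch of Theorem \ref{th:CISE-hard}, covers the relaxed function family.
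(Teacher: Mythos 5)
Your proposal is correct and follows essentially the same route as the paper: reduce inversion of $H_\ovc$ to the randomized $\CISE_{\{1,2\}}$ problem, argue that the induced instance distribution is (nearly) the uniform one, and then invoke Theorem \ref{th:CISE-hard} to contradict the worst-case hardness of $\SIVP_{\gamma=pn^6}$. The only difference is one of detail: the paper dismisses the distribution-matching step with ``it is not difficult to check,'' whereas you correctly identify it as a leftover-hash/regularity statement resting on the hypothesis $n\log(p)<m$ from \eqref{eq:mnp-conditions}.
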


\begin{proof}
To prove that $\CH$ is a one-way function family it is sufficient to show that
for every PPT algorithm $\CA$ the sequence of values
$$
P_\CA(n)\ =\ \Pr[\CA(1^n,\ovc,H_\ovc(\ovx)) \in H_\ovc^{-1}(H_\ovc(\ovx))]
$$
converges to $0$ faster than every $1/poly(n)$,
where the probability is taken over
\begin{itemize}
\item 
uniform choices of $\ovc\in (C_{p,n})^m$,
\item 
uniform choices of $\ovx\in \{0,1\}^n$, and
\item 
the coin-tosses of $\CA$.
\end{itemize}
It is not difficult to check that 
for uniformly distributed $\ovc$ and $\ovx$,
the values of $H_\ovc(\ovx)\in C_{p,n}$ are distributed
nearly uniformly (the sequence of distributions on $C_{p,n}$ 
converges to the uniform distribution exponentially fast in terms of $n$) and
we may assume that $\CA$ deals with uniformly randomized $\CISE_{\{1,2\}}$.

Therefore, if it is not true that $P_\CA(n)$ is $o(n^{-d})$
for some PPT algorithm $\CA$ and some $d>0$, then 
$P_\CA(n) \ge c\cdot n^{-d}$ (for some $c>0$) satisfied
for infinitely many indices $n$. 
By Theorem \ref{th:CISE-hard} that means that there is a PPT algorithm 
solving $\SIVP_{\gamma=pn^6}$ with overwhelming probability 
on every lattice for infinitely many dimensions $n$, 
which contradicts the assumption \eqref{eq:SIVP-hardness}. 
\end{proof}

\begin{prop}
If $\SIVP_{\gamma=pn^6}$ is hard in the worst case, then
$\CH$ is a collision-free function family.
\end{prop}

\begin{proof}
To prove that $\CH$ is a collision-free hash function family
it is sufficient to show that for every PPT algorithm $\CA$ the sequence of values
$$
P_\CA(n)\ =\ \Pr[(\ovx,\ovy)=\CA(1^n,\ovc)\ \&\ H_\ovc(\ovx)=H_\ovc(\ovy))]
$$
converges to $0$ faster than every $1/poly(n)$,
where the probability is taken over
\begin{itemize}
\item 
uniform choices of $\ovc\in (C_{p,n})^m$,
\item 
the coin tosses of $\CA$.
\end{itemize}
If this condition is not satisfied, then there is a PPT algorithm $\CA$
and $c,d>0$ satisfying $P_\CA(n) \ge c\cdot n^{-d}$ for infinitely many
indices $n$. Observe that
\begin{align*}
H_\ovc(\ovx)=H_\ovc(\ovy)
&\ \ \Leftrightarrow\ \ 
\sum_{j=1}^m x_j \ovc_j = \sum_{j=1}^m y_j \ovc_j\\
&\ \ \Leftrightarrow\ \ 
\sum_{j=1}^m (2+x_j-y_j) \ovc_j = \ovo,
\ \ \mbox{where } 2+x_j-y_j\in\{1,2,3\}
\\
&\ \ \Rightarrow\ \ 
\{z_j=(\ovo,(2+x_j-y_j)^{-1})\}_{j=1}^m
\mbox{ satisfies \eqref{eq:CSE-123}.}
\end{align*}
Thus, if we can efficiently find collisions for uniformly distributed 
$0/1$-spherical equations $H_\ovc\in \CH_n$ for infinitely many
indices $n$, then
we can efficiently find solutions for uniformly 
distributed $\CISE_{\{1,2,3\}}$.
Then, by Theorem \ref{th:CISE-hard}, there is a PPT algorithm 
solving $\SIVP_{\gamma=pn^6}$ with overwhelming probability 
on every lattice for infinitely many dimensions $n$, 
which contradicts the assumption \eqref{eq:SIVP-hardness}. 
Contradiction.
\end{proof}

\section{The acyclic graph word problem: average-case hardness}
\label{se:agwp}

$\CISE$ can be naturally reduced to different knapsack-type problems
in groups. One such reduction is discussed in this section.
The \emph{acyclic graph word problem} was introduced in 
\cite{Frenkel-Nikolaev-Ushakov:2014} as a convenient
generalization of $\SSP$ and as a tool for studying $\SSP$.
Let $X$ be a generating set for $G$.

\medskip\noindent
\textbf{The acyclic graph word problem, $\AGP(G,X)$:}
Given an acyclic directed graph $\Gamma$ with edges labeled by letters 
in $X\cup X^{-1}\cup \{\varepsilon\}$ with two marked vertices, $\alpha$ and $\omega$, decide whether there is an oriented path in $\Gamma$ from $\alpha$ to $\omega$ labeled by a word $w$ such that $w=1$ in $G$.

\medskip
It is easy to show that complexity of $\AGP$ does not depend on a choice
of a generating set for $G$ and the problem can be abbreviated $\AGP(G)$.
Also, we can work with edges labeled with words over the alphabet $X$.
There is a natural polynomial-time reduction from (decision/search)-$\CISE$ 
to (decision/search)-$\AGP$. Indeed, for an instance of $\CISE$
$$
\left\{
\begin{array}{l}
\prod_{i=1}^m z_i^{-1} c_i z_i= c\\
z_i\in Z_i,
\end{array}
\right.
$$
we can construct a labelled digraph $\Gamma=(V,E)$, where
\begin{itemize}
\item 
$V=\{0,1,\dots,m,m+1\}$
\item 
$E=\Set{j-1\stackrel{z^{-1}c_jz}{\longrightarrow}j}{z\in Z_j,\ j=1,\dots,m} 
\cup \{m\stackrel{c^{-1}}{\longrightarrow}m+1\}$
\item 
the starting point $\alpha$ is $0$,
\item 
the terminal point $\omega$ is $m+1$.
\end{itemize}
The construction is visualized in Figure \ref{fi:cise-to-agwp}.
By construction of $\Gamma$, the following statement holds.

\begin{figure}[h]
\begin{center}
\includegraphics[scale=0.75]{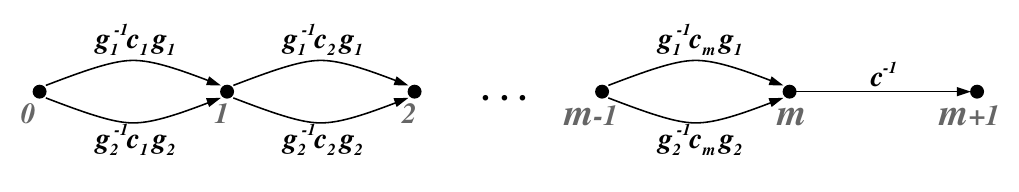}
\end{center}
\caption{An instance of $\AGP$ corresponding to an instance of $\CISE$
(assuming that $Z_j=\{g_1,g_2\}$).}
\label{fi:cise-to-agwp}
\end{figure}

\begin{prop}\label{pr:cise-to-agwp}
The instance of $\CISE$ is positive
if and only if the instance of $\AGP$ is positive.
\end{prop}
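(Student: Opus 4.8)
The plan is to exploit the rigid, layered structure of the digraph $\Gamma$ built from the $\CISE$ instance. The first step is to describe all directed paths from $\alpha=0$ to $\omega=m+1$. Since every edge of $\Gamma$ goes either from $j-1$ to $j$ for some $j\in\{1,\dots,m\}$ or from $m$ to $m+1$, the graph is a chain of ``bundles'' of parallel edges, and any directed path from $0$ to $m+1$ must traverse the vertices $0,1,2,\dots,m,m+1$ in precisely this order, using exactly one edge between each pair of consecutive vertices. In particular, such a path is completely determined by a choice of one edge out of each bundle: for $j=1,\dots,m$ we pick an edge labeled $z_j^{-1}c_j z_j$ with $z_j\in Z_j$, and the final edge is the unique edge labeled $c^{-1}$.

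The second step is to read off the word along such a path and relate it to the equation. A path corresponding to the choice $(z_1,\dots,z_m)\in Z_1\times\cdots\times Z_m$ spells the word
$$
w=(z_1^{-1}c_1 z_1)\cdots(z_m^{-1}c_m z_m)\,c^{-1}.
$$
By definition, the $\AGP$ instance is positive exactly when some such path satisfies $w=1$ in $G$. Since $w=1$ holds if and only if $\prod_{i=1}^m z_i^{-1}c_i z_i=c$, a path with $w=1$ exists if and only if there is a tuple $(z_1,\dots,z_m)\in Z_1\times\cdots\times Z_m$ satisfying the constrained equation --- that is, if and only if the $\CISE$ instance is positive. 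Reading the equivalence in both directions gives the claim.

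There is essentially no hard step here; the content is entirely bookkeeping. The only point that requires minor care is the claim in the first step that every directed $0\to(m+1)$ path has exactly the stated form, with no skipped or repeated layers and no shortcut edges. This is immediate from the explicit edge set $E$, which contains no edges other than the single-step ones described, so that the acyclic layered structure leaves no freedom beyond the choice of one representative $z_j\in Z_j$ per bundle. I would state this observation explicitly and then let the two implications follow from the displayed chain of equivalences.
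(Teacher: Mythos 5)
Your proof is correct and takes exactly the approach the paper intends: the paper actually gives no explicit proof, asserting the proposition follows ``by construction of $\Gamma$,'' and your argument is precisely the path-enumeration bookkeeping that assertion leaves implicit. The one point you rightly flag --- that the layered edge set forces every $0\to(m+1)$ path to pick exactly one edge per bundle and end with the $c^{-1}$ edge --- is the whole content, and you handle it correctly.
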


The reduction described above 
induces a system of probability measures $\{\mu_n\}_{n\in\MN}$
on instances of $\AGP$ over the class of groups $\MZ_p^n\rtimes \MZ_p^\ast$.
In fact, each $\mu_n$ gives uniform distribution on a subset 
of instances of $\AGP$ of type shown in Figure \ref{pr:cise-to-agwp}.
That establishes a one-to one correspondence between
uniformly randomized $\CISE_{\{1,2\}}$ described in Section \ref{se:cise}
and $\AGP$ endowed with $\{\mu_n\}_{n\in\MN}$, which implies
average-case hardness of $\AGP$ (again, assuming \eqref{eq:SIVP-hardness}).

The converse reduction appears to be implausible.
In fact, for some groups the converse is clearly not true, 
e.g., for $\MZ_3^\omega$ 
(and for the class of groups $\{\MZ_3^n\}_{n\in\MN}$)
we have
\begin{itemize}
\item 
$\CISE$ is polynomial-time decidable, but
\item 
$\SSP$ (and hence $\AGP$) is $\NP$-complete.
\end{itemize}

\section{Open problems and questions for further study}

In this section we outline several problems related to 
spherical equations, 
constrained spherical equations,
relations between constrained/unconstrained problems,
average-case complexity, and design of hash functions.

\subsection{Spherical equations over finite groups: constrained versus unconstrained}

\begin{prob}\label{pr:pr1}
Does there exist a class of finite groups $\CG$ satisfying 
the following conditions:
\begin{itemize}
\item 
the Diophantine problem for spherical equations over $\CG$
is efficiently decidable,
\item 
the Diophantine problem for constrained
spherical equations is computationally hard?
\end{itemize}
\end{prob}

Denote by $\prm$ the set of all prime numbers.
It was proved in \cite{Mattes-Ushakov-Weiss:2024}
that the Diophantine problem for
spherical equations over $\CG=\{\GL(2,p)\}_{p \in \prm}$
can be solved efficiently.

\begin{prob}\label{pr:pr2}
Does $\{\GL(2,p)\}_{p \in \prm}$ satisfy the second condition of Problem \ref{pr:pr1}?
\end{prob}

Problem \ref{pr:pr2} can be generalized as follows.

\begin{prob}\label{pr:pr3}
Does any family of classical finite groups 
(such as $\SL(n,p)$, $\PSL(n,p)$, etc., see \cite{Kleidman-Liebeck:1990}) 
satisfy conditions of Problem \ref{pr:pr1}?
\end{prob}

\subsection{Constrained spherical equations: foundation of average-case hardness}
\label{se:CSE-foundations}

In Section \ref{se:cise} we showed that for  groups $\{G_{p,n}\}$ 
constrained spherical equations (randomized in a certain way)
are hard on average by establishing a one-to-one correspondence
between $\CISE_{\{1,2\}}$ and $\ISIS_{\{0,1\}}$.
Average-case hardness of $\ISIS_{\{0,1\}}$ is linked 
(by M. Ajtai) to the worst-case hardness 
of lattice approximation problems, such as $\SIVP_\gamma$,
which makes hardness of $\SIVP_\gamma$ the foundation of average hardness
for $\CISE_{\{1,2\}}$ and $\ISIS_{\{0,1\}}$.
Informally, is it possible to untie $\CISE_{\{1,2\}}$ and $\ISIS_{\{0,1\}}$ 
from $\SIVP_\gamma$?

One way to approach this question is to utilize
a discrete logarithm type self-reduction idea.
For $\ISIS_{\{0,1\}}$ that means to 
design a class of randomized self-reductions $\Phi$ between instances of
$\ISIS_{\{0,1\}}$ and use $\Phi$ to enhance $\ISIS$-solvers as follows.
If $A$ is an $\ISIS$-solver, then an enhanced $\ISIS$-solver $B$
for a given instance $I$  performs the following:
\begin{itemize}
\item[(a)]
Apply $A$ to $I$ and if it succeeds, then output the result.
\item[(b)]
Choose a random $\varphi\in\Phi$
and apply $A$ to $\varphi(I)$. If it succeeds and
produces a solution for $\varphi(I)$, then use it to ``reconstruct''
and output a solution for $I$.
\item[(c)]
Repeat (b) until it succeeds.
\end{itemize}
If $\Phi$ is sufficiently large, then this approach might work.
Notice that existence of uniform self-reductions for $\ISIS$
(like for the discrete logarithm problem)
appears to be implausible.

\begin{prob}\label{pr:pr4}
Design a class of randomized self-reductions $\Phi$ for $\ISIS_{\{0,1\}}$ and use them
to prove the following.
If $\ISIS_{\{0,1\}}$ can be solved by a PPT algorithm $A$ on a 
non-negligible set of instances, then it can be solved by a PPT algorithm 
$B$ on every instance with overwhelming probability.
\end{prob}

Similarly, it would be interesting to investigate self-reductions for $\CISE$
over some groups $\CG$
and use them to demonstrate that the worst-case hardness for $\CISE$
implies its average-case hardness.

\begin{prob}\label{pr:pr4a}
For a class of spherical equations over a class of groups $\CG$,
design a class of randomized self-reductions for $\CISE$ and 
use them to prove the following.
If $\CISE$ can be solved by a PPT algorithm $A$ on a 
non-negligible set of instances, then it can be solved by a PPT algorithm 
$B$ on every instance with overwhelming probability.
\end{prob}

\subsection{Constrained spherical equations: generalization}

As we have shown in Section \ref{se:agwp}, instances of $\CISE$ 
can be transformed into instances of $\AGP$ of certain type (shown in 
Figure \ref{pr:cise-to-agwp}).
Let us generalize the digraph shown in Figure \ref{pr:cise-to-agwp}
and modify it into a finite-state transducer.
For a table of elements $C=\{c_{ij}\}$ from $G$ (where $i=1,\dots,m$ and $j=0,1$)
define a transducer $\Gamma_C$ shown in Figure \ref{fi:transducer}.
\begin{figure}[h]
\begin{center}
\includegraphics[scale=0.75]{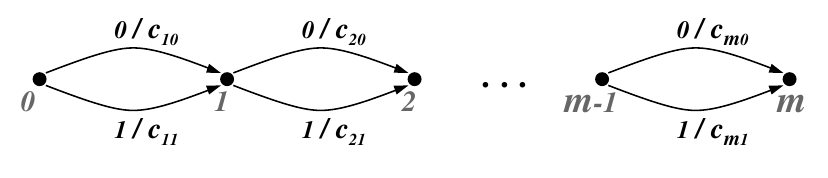}
\end{center}
\caption{The transducer $\Gamma_C$.}
\label{fi:transducer}
\end{figure}
Each $\Gamma_C$ defines a function $J_C\colon \{0,1\}^m\to G$ as follows:
$$
(b_1,\dots,b_m)
\ \ \stackrel{J_C}{\mapsto}\ \ 
c_{1b_1}
\cdots
c_{mb_m}.
$$
By design,
the problem of finding a $J_C$-preimage $(b_1,\dots,b_m)\in\{0,1\}^m$ of a given 
element $c$ generalizes $\CISE$.
Also, notice that the functions $J_C$ generalize
hash function construction introduced in \cite{Zemor:1991} 
and its numerous variations 
(e.g., \cite{Tillich-Zemor:1994,Bromberg-Shpilrain-Vdovina:2015}),
including those utilizing monoids.

\begin{prob}\label{pr:pr5}
Investigate average-case hardness of computing $J_C^{-1}$ for $C$ 
sampled uniformly randomly.
\end{prob}

The most promising approach to Problem \ref{pr:pr5} appears to be
via self-reductions similar to those discussed in Section \ref{se:CSE-foundations}.
Furthermore, it might be easier to design self-reductions for functions
$J_C$ because 
$J_C$ does not require pairs $c_{i0}$ and $c_{i1}$ to be conjugate.

\begin{prob}\label{pr:pr61}
For a class of groups $\CG=\{G_n\}$ design a randomized self-reduction
for the problem of computing $J_C^{-1}$ 
satisfying the following:
if $J_C^{-1}$ can be computed by a PPT algorithm on a non-negligible
set of instances, then it 
can be computed by a PPT algorithm on every instance 
with overwhelming probability.
\end{prob}

\begin{prob}\label{pr:pr6}
For a class of groups $\CG$
investigate cryptographic properties 
for the family of functions $\{J_C\}_{C\in G^{2\times m}}$.
\end{prob}

\subsection{Constrained spherical equations: learning without errors}

Consider a black-box device $D$ that computes values of a ``hidden'' function
$H_\ovc\colon \{0,1\}^m\to G$ for some $\ovc\in G^m$
hardwired inside of $D$. The device has a single button
which, when pressed, produces a pair $(\ovb,g)$, where
\begin{itemize}
\item 
$\ovb=(b_1,\dots,b_m)\in\{0,1\}^m$ is chosen uniformly randomly,
\item 
$g=H_\ovc(\ovb)$.
\end{itemize}
\emph{To learn the hidden function} means to find 
the secret element $\ovc$ using a number of sampled pairs $(\ovb,g)$.

\begin{prob}
Investigate computational complexity of learning $\ovc$ 
from a hidden function $H_\ovc$
for different (classes of) finite/infinite groups.
\end{prob}

In a similar way, we can consider a device $D$ that computes 
values of a ``hidden'' function $J_C\colon \{0,1\}^m\to G$
and the problem of learning $C$.

\begin{prob}
Investigate computational complexity of learning $C$ 
from a hidden function $J_C$
for different (classes of) finite/infinite groups.
\end{prob}

\subsection{Spherical equations over infinite groups: decidability}

For what (infinite) groups $G$ is the Diophantine problem for spherical equations decidable,
but the problem for constrained spherical equations is not?
Obviously any instance of $\CISE$ with a finite search space
(e.g. when $|G|<\infty$ or when each variable can attain finitely many values)
can be solved by enumerating all possible solutions. 
If $|G|=\infty$ and $Z_j$ are allowed to be infinite,
then $\CISE$ can become undecidable.
This question depends on how we allow to constrain variables, e.g.
\begin{itemize}
\item 
sets $Z_j$ can be finitely generated subgroups of $G$,
\item 
or (more generally) sets $Z_j$ can be defined by rational subsets of $G$.
\end{itemize}

\begin{prob}
For what (infinite) groups $G$ the Diophantine problem for spherical
equations is decidable, but spherical equations with 
constraints of the form $z_j\in Z_j$, where \emph{$Z_j$ is 
a finitely generated subgroup} of $G$ is not decidable. 
\end{prob}

Obvious candidates are groups containing a subgroup $H$
with undecidable membership problem, such as partially commutative groups.
We believe that for these groups the Diophantine problem for
spherical equations belongs to $\NP$, but the problem for
the constrained conjugacy equations
$$
\left\{
\begin{array}{l}
z^{-1} c z = c\\
z\in H,
\end{array}
\right.
$$
is not decidable. We also formulate a similar problem for
rationally constrained spherical equations.

\begin{prob}
For what (infinite) groups $G$ the Diophantine problem for spherical
equations is decidable, but spherical equations with 
constraints of the form $z_j\in Z_j$, where \emph{$Z_j$ is 
a rational subset} of $G$, is not decidable.
\end{prob}

Another class of groups with undecidable membership problem is the 
braid groups. For the braid groups it is not 
known how to approach even unconstrained spherical equations.

\begin{prob}
Is the Diophantine problem for spherical (quadratic)
equations over braid groups decidable?
\end{prob}

\subsection{Spherical equations over infinite groups: universality}

Recall that a set of functions $H=\{h\colon D\to R\}$ is \emph{universal} if 
for any distinct $x,y\in D$ 
$$
\Pr[h(x)=h(y)] \le \tfrac{1}{|R|},
$$
where the probability is taken over a uniformly chosen $h\in H$.
This condition can be relaxed by allowing
$\Pr[h(x)=h(y)]$ to be $O(\tfrac{1}{|R|})$.
Notice that in this paper 
for $R$ we have used certain classes of finite groups only
(namely, $\{\MZ_p^n \rtimes \MZ_p^\ast\}$ and $\{S_n\}$).
Is it possible to construct a family of efficient universal functions
using a single infinite group?

\begin{prob}
Design a family of universal 
$0/1$-spherical functions or $J_C$ functions with the ranges 
$R$ being a part of the same infinite group.
\end{prob}


\subsection{Spherical functions: public-key encryption}

\begin{prob}
Is it possible to design a public-key encryption scheme which security
is based on computational hardness of solving constrained spherical equations
(or another related group-theoretic problem)?
\end{prob}

\bibliographystyle{plain}

\end{document}